\numberwithin{equation}{section}
\newtheoremstyle{slplain}
  {\topsep}
  {\topsep}
  {\slshape}
  {0pt}
  {\bfseries}
  {.}
  {0.5em}
  {}
\theoremstyle{slplain}
  \newtheorem{THM}{Theorem}[section]
  \newtheorem{LEM}[THM]{Lemma}
  \newtheorem{PROP}[THM]{Proposition}
  \newtheorem{COR}[THM]{Corollary}
\theoremstyle{definition}
  \newtheorem{DEF}[THM]{Definition}
\renewcommand{\preceq}{\preccurlyeq}
\renewcommand{\succeq}{\succcurlyeq}
\newcommand\toCC[1]{\overset{#1}\longrightarrow}
\renewcommand{\le}{\leqslant}
\renewcommand{\ge}{\geqslant}
\newcommand{\0}{\varnothing}
\renewcommand{\phi}{\varphi}
\newcommand{\CC}{\mathbf{C}}
\newcommand{\DD}{\mathbf{D}}
\newcommand{\KK}{\mathbf{K}}
\newcommand{\NN}{\mathbb{N}}
\newcommand{\RR}{\mathbb{R}}
\renewcommand{\SS}{\mathbf{S}}
\newcommand{\union}{\cup}
\newcommand{\Boxed}[1]{\mbox{$#1$}}
\newcommand{\id}{\mathrm{id}}
\newcommand{\Ob}{\mathrm{Ob}}
\newcommand{\calA}{\mathcal{A}}
\newcommand{\calB}{\mathcal{B}}
\newcommand{\calC}{\mathcal{C}}
\newcommand{\calP}{\mathcal{P}}
\newcommand{\Cat}{\mathbf{Cat}}
\newcommand{\Ess}{\mathrm{Ess}}
\newcommand{\Aut}{\mathrm{Aut}}
\newcommand{\Part}{\mathrm{Part}}
\newcommand{\upset}[2]{\Boxed{\uparrow_{#1}}\mathstrut#2}
\newcommand{\subobj}[3]{\binom{#3}{#2}_{#1}}
\newcommand{\Bol}{{\mathit{Bol}}}
\newcommand{\Sha}{{\mathit{Sha}}}
\title{Ramsey degrees and entropy of combinatorial structures}
\author{%
  Dragan Ma\v sulovi\'c\\
  University of Novi Sad, Faculty of Sciences\\
  Department of Mathematics and Informatics\\
  Trg Dositeja Obradovi\'ca 3, 21000 Novi Sad, Serbia\\
  e-mail: dragan.masulovic@dmi.uns.ac.rs}
\begin{document}
\maketitle

\begin{abstract}
  Close connections between various notions of entropy and the apparatus of category theory
  have been observed already in the 1980s and more vigorously developed in the past ten years.
  The starting point of the paper is the recent categorical understanding of structural Ramsey degrees,
  which then leads to a way to compute entropy of an object in a small category not as a
  measure of statistical, but as a measure of its combinatorial complexity.
  The new entropy function we propose, the Ramsey entropy, is a real-valued invariant of an object
  in an arbitrary small category.
  We require no additional categorical machinery to introduce and prove the properties of this entropy.
  Motivated by combinatorial phenomena (structural Ramsey degrees) we build the necessary infrastructure
  and prove the fundamental properties using only special partitions imposed on homsets.
  We conclude the paper with the discussion of the maximal Ramsey entropy on
  a category that we refer to as the Ramsey-Boltzmann entropy.

  \bigskip

  \noindent \textbf{Key Words:} entropy, structural Ramsey degrees, finite structures

  \noindent \textbf{AMS Subj.\ Classification (2010):} 18A99, 05C55, 94A17
\end{abstract}

\section{Introduction}

Close connections between various notions of entropy and the apparatus of category theory
have been observed already in the 1980's~\cite{lawvere-cats-entropy-1984}
and more vigorously developed in the past ten years~\cite{baez-fritz-2014,baez-fritz-leinster-2011}.
In particular, \cite[Chapter 12]{leinster-entropy-diversity-2020} describes a general
categorical construction which specialized to a real line produces the Shannon entropy.
In this paper we add to the body of arguments in favor of the categorical treatment of entropy.
Our entropy function is a real invariant of an object in an arbitrary small category. We require no
additional categorical machinery to introduce and prove the properties of this entropy.
Motivated by combinatorial phenomena (structural Ramsey degrees) we build the necessary infrastructure
and prove the fundamental properties using only special partitions imposed on homsets.

We do not think of objects in a category as probability distributions and then introduce the
abstract entropy function fashioned after the Shannon entropy. Rather, our categories
are models of classes of finite first-order structures. Recent categorical understanding of
structural Ramsey degrees then leads to a way to compute entropy of an object in a small
category not as a measure of statistical, but as a measure of its combinatorial complexity.
It should, therefore, come as no surprise that it is the Boltzmann entropy, not the Shannon entropy,
that plays the key role in understanding the properties of the entropy function proposed by the paper.

Two central features of every entropy function are continuity and the additive,
or logarithmic property ($H(A, B) = H(A) + H(B)$). Due to the discrete nature
of finite structures (and, consequently, absence of any nontrivial intrinsic topology)
in this paper we adopt the approach advocated in \cite{lieb-yngvason-1999} where
beside the additive property entropy is required to be monotonous.
It is important to stress, however, that monotonicity is a form of semicontinuity
with respect to the topology induced by the (pre)ordering relation. 
Each class of finite structures is naturally preordered by embeddability: we let
$A \preceq B$ if there is an embedding $A \hookrightarrow B$.
So, the entropy functions we are interested in are those with
the additive property ($H(A, B) = H(A) + H(B)$)
which are monotonous ($A \preceq B \Rightarrow H(A) \le H(B)$).

The paper is organized as follows. In Section~\ref{rament.sec.prelim} we provide a brief overview of
notions related to partitions and various entropy functions on partitions,
we then recall some basic facts from category theory, and finally introduce several Ramsey-related phenomena
using the language of category theory.
In Section~\ref{rament.sec.rament} we introduce the notion of Ramsey entropy.
Our starting point is a small category whose morphisms are mono and homsets are finite.
In this setting we isolate special partitions imposed on homsets that we refer to as \emph{essential partitions}
and introduce the Ramsey entropy as a function related to an entropy function applied to such partitions.
We then use the close relationship between essential partitions and structural Ramsey degrees to
justify the name: we dare call this function entropy because it is subadditive and monotonous;
and we dare call it Ramsey because it is bounded by the logarithm of the structural Ramsey degree,
exists in case the category has finite Ramsey degrees and is zero on subobjects of Ramsey objects.

The rest of the paper is devoted to maximal Ramsey entropy referred to as the \emph{Ramsey-Boltzmann entropy.}
The behavior of the maximal entropy is governed by the properties of structural Ramsey degrees which
act as the corresponding diversity measure.
Most results in Section~\ref{rament.sec.r-b}, and in particular the additivity of the Ramsey-Boltzmann
entropy, then follow as immediate corollaries. The paper concludes with
the discussion of the behavior of the Ramsey-Boltzmann entropy under forgetful functors.
Since such a functor takes an ``object with more structure'' to an ``object
with less structure'', it makes sense to assume that the entropy should increase along the way.

\section{Motivation}

Thinking of entropy of a finite structure, for example a finite simple graph,
as the amount of information needed to describe the structure,
we expect this magnitude to be related to the size of the automorphism group of the structure:
structures with few automorphisms need complex descriptions and, thus,
should have high entropy, while structures with large automorphism groups are highly symmetric
and of low entropy. Whether the automorphism group of a structure is ``small'' or ``large''
is usually measured with respect to the size of the full symmetric group. So, the entropy
of a finite structure $A$ should be related to $\frac{n(A)!}{|\Aut(A)|}$, where $n(A)$ is the
size of~$A$. However, taking the entropy of $A$ to be simply the logarithm of this quantity
does not suffice for at least two reasons: in most cases $\log \frac{n(A)!}{|\Aut(A)|}$ is not
monotonous in the way we have indicated above;
on the other hand, while $\log \frac{n(A)!}{|\Aut(A)|}$ displays many desirable
properties in case of finite simple graphs, for other combinatorially interesting classes of
structures such as finite partial orders the additive property cannot be established. 
Interestingly, in case of finite partial orders
replacing $\frac{n(A)!}{|\Aut(A)|}$ with $\frac{e(A)}{|\Aut(A)|}$, where $e(A)$ is the number of
linear extensions of~$A$, yields a quantity possessing the same desirable properties.
This immediately suggests (see~\cite{fouche97,fouche98,fouche99} for computations of structural Ramsey degrees
for some prominent classes of finite structures) that the entropy function we are interested in should be related
to \emph{structural Ramsey degrees} of finite structures.

The most concise way to introduce various Ramsey phenomena is with the help of the \emph{Erd\H os-Rado partition arrow}.
The Erd\H os-Rado partition arrow was introduced in the 1950's as a convenient tool
to generalize Ramsey's theorem from the positive integers (that is, $\aleph_0$) to larger cardinals.
This successful generalization of Ramsey's theorem from sets (unstructured objects) to cardinals (special well-ordered chains)
prompted in the early 1970's the idea of generalizing the setup to arbitrary first-order structures, giving birth to
\emph{structural Ramsey theory}. 

For finite first-order structures $\calA, \calB, \calC$ over the same first-order language and for
a positive integer  $k \in \NN$, we write $\calC \longrightarrow (\calB)^\calA_{k}$ to denote that for every
coloring of the set $\binom \calB \calA$ of all the substructures of $\calB$
isomorphic to $\calA$ into $k$ colors (not all of which have to be used) there is a $\calB^* \in \binom \calC\calB$ such that
$\binom{\calB^*}{\calA}$ is \emph{monochromatic} (that is, all $\calA^* \in \binom{\calB^*}{\calA}$ are colored by the same color).
A class $\KK$ of finite first-order structures over the same first-order language
has the \emph{Ramsey property} if for every $\calA, \calB \in \KK$
there is a $\calC \in \KK$ such that for every $k \in \NN$ we have that $\calC \longrightarrow (\calB)^\calA_{k}$.

Alas, most combinatorially interesting classes of structures (finite graphs, finite partial orders etc)
\emph{do not} enjoy the Ramsey property. There are two ways to remedy this situation,
and, fortunately, they are equivalent. (The equivalence of the two approaches was observed and proved only a few years ago,
in~2016 in~\cite{Zucker-1}.)
Already in the 1970's it was observed that in combinatorially interesting cases adding carefully chosen linear orders yields
classes with the Ramsey property. For example, by adding all possible linear orders to finite graphs
we get a class of finite ordered graphs which is a Ramsey class; similarly, by adding all possible linear
extensions to finite partial orders we get another Ramsey class. On the other hand, if for some reason we
do not wish to modify the language, we can prove that almost all combinatorially
interesting classes of finite structures enjoy the weaker property of \emph{having
finite Ramsey degrees}. This phenomenon was observed already in the analysis of the Ramsey property for infinite
cardinals, and the first Ramsey degrees for combinatorially interesting classes of structures
were computed in~\cite{fouche97,fouche98,fouche99}.
An integer $t \ge 1$ is a \emph{Ramsey degree} of $\calA \in \KK$ if it is the smallest
positive integer satisfying the following: for any $k \in \NN$ and
any $\calB \in \KK$ there is a $\calC \in \KK$ such that
$$\displaystyle
  \calC \longrightarrow (\calB)^\calA_{k, t}.
$$
This is a symbolic way of expressing that 
no matter how we color the copies of $\calA$ in $\calC$ with $k$ colors, one can always find a \emph{$t$-oligochromatic} copy
$\calB^* \in \binom \calC\calB$ (that is, at most $t$ colors are used in the coloring of $\binom{\calB^*}\calA$).
If no such $t \ge 1$ exists for an $\calA \in \KK$, we say that $\calA$ \emph{does not have finite Ramsey degree.}
For example, finite graphs, finite partial orders and many other classes of finite structures
are known to have finite Ramsey degrees \cite{fouche97,fouche98,fouche99}.

As the structural Ramsey theory evolved, it has become evident that the Ramsey phenomena
depend not only on the choice of objects, but also on the choice of morphisms involved.
It was Leeb who pointed out already in the early 1970's \cite{leeb-cat,leeb-vorlesungen}
that the use of category theory can be quite helpful
both in the formulation and in proving results pertaining to structural Ramsey theory.
However, instead of pursuing the original approach by Leeb (which has very fruitfully been applied to a wide range of
Ramsey-type problems \cite{GLR, leeb-cat, Nesetril-Rodl}),
we proposed in~\cite{masulovic-ramsey} a systematic study of
a simpler approach motivated by and implicit in \cite{mu-pon,vanthe-more,Zucker-1}.

Another observation that crystallized over the years is the fact that we can and have to distinguish
between the Ramsey property for structures (where we color \emph{copies} of one structure within another structure)
and the Ramsey property for embeddings (where we color \emph{embeddings} of one structure into another structure).
Consequently, we shall introduce both structural and embedding Ramsey degree of an object.
Although structural Ramsey degrees in a category are true generalizations of Ramsey degrees for structures, it turns out that
embedding Ramsey degrees are easier to calculate with. Fortunately, the relationship between the two is straightforward,
as demonstrated in \cite{Zucker-1}, and it carries over to the abstract case of Ramsey degrees in
categories (see Proposition~\ref{rdbas.prop.sml}).

\section{Preliminaries}
\label{rament.sec.prelim}

\paragraph{Partitions.}
Let $X$ be a nonempty set. A \emph{partition of $X$} is a set $\Pi = \{\beta_i : i \in I\} \subseteq \calP(X)$
such that all the $\beta_i$'s are nonempty, $\bigcup_{i \in I} \beta_i = X$ and $i \ne j \Rightarrow \beta_i \cap \beta_j = \0$ for all $i, j \in I$.
A partition is finite if the index set $I$ is finite. The $\beta_i$'s will be referred to as the \emph{blocks} of~$\Pi$.
If $x, y \in X$ belong to the same block of $\Pi$ we shall write $x \equiv_\Pi y$.

By $\Part(X)$ we denote the set of all the partitions of $X$.
Recall that $\Part(X)$ can be partially ordered in a standard fashion:
for $\Sigma, \Pi \in \Part(X)$ we write $\Sigma \preceq \Pi$ when $\Pi$ is finer than $\Sigma$
(that is, for every $\beta \in \Pi$ there is a $\gamma \in \Sigma$ such that $\beta \subseteq \gamma$).
This ordering turns $\Part(X)$ into a lattice; in particular
every family $\Pi_i$, $i \in I$, of partitions of $X$ has the supremum $\bigvee_{i \in I} \Pi_i$
in the lattice $\Part(X)$. Note that the supremum refines partitions.

If $\Pi \in \Part(X)$ and $\Sigma \in \Part(Y)$ are two partitions not necessarily on the same set,
we say that $\Pi$ is \emph{isomorphic to} $\Sigma$, and write $\Pi \cong \Sigma$,
if there is a bijection $f : X \to Y$ such that $\Sigma = \{f(\beta) : \beta \in \Pi\}$.
We can also define the \emph{product} $\Pi \otimes \Sigma$ as follows:
$\Pi \otimes \Sigma = \{\beta \times \gamma : \beta \in \Pi, \gamma \in \Sigma\} \in \Part(X \times Y)$.
Note that $(x, y) \equiv_{\Pi \otimes\Sigma} (x', y')$ iff $x \equiv_\Pi x'$ and $y \equiv_\Sigma y'$.

We say that $H$ is an \emph{entropy on partitions} if $H$ is a family of functions
$H_X : \Part(X) \to \RR \union \{\infty\}$
indexed by finite sets $X$ such that the following holds for all finite sets $X$ and $Y$,
and all $\Pi, \Sigma \in \Part(X)$ and $\Lambda \in \Part(Y)$:
\begin{itemize}
\item $H_X(\Pi) \le \log |\Pi|$;
\item $H_X(\Pi) = 0$ if and only if $\Pi = \{X\}$ --- the trivial one-block partition;
\item if $\Sigma \preceq \Pi$ then $H_X(\Sigma) \le H_X(\Pi)$;
\item if $\Pi \cong \Lambda$ then $H_X(\Pi) = H_Y(\Lambda)$; and
\item $H_{X \times Y}(\Pi \otimes \Lambda) = H_X(\Pi) + H_Y(\Lambda)$.
\end{itemize}
The most prominent entropy function is definitely the \emph{Shannon entropy}:
$$
  H_X^\Sha(\Pi) = - \sum_{\beta \in \Pi} p(\beta) \log p(\beta)
$$
where $p(\beta) = |\beta| / |X|$. Nevertheless, in this paper we shall make use of the
\emph{Boltzmann entropy}:
$$
  H_X^\Bol(\Pi) = \log |\Pi|.
$$
Just to have things fixed, in this paper $\log x$ means $\log_2 x$.

\paragraph{Categories.}
Let us quickly fix some less standard notation. All categories in this paper are locally small.
The composition of morphisms in a category will be denoted by $\cdot$, e.g.\ $\id_B \cdot f = f = f \cdot \id_A$
for all $f \in \hom_\CC(A, B)$, and $(f \cdot g) \cdot h = f \cdot (g \cdot h)$ whenever the compositions are defined.
We write $A \toCC\CC B$ as a shorthand for $\hom_\CC(A, B) \ne \0$.
Note that $\toCC\CC$ is reflexive and transitive, so every category is naturally preordered.
If $\CC$ is a small category, for $A \in \Ob(\CC)$ let $\upset \CC A = \{B \in \Ob(\CC) : A \toCC\CC B\}$.
A category $\CC$ is \emph{directed} if for every $A, B \in \Ob(\CC)$ there is a $C \in \Ob(\CC)$
such that $A \toCC\CC C$ and $B \toCC\CC C$; and $\CC$ is a \emph{category with amalgamation}
if for every choice of objects $A, B, C \in \Ob(\CC)$ and morphisms $f \in \hom_\CC(A, B)$
and $g \in \hom_\CC(A, C)$ there is an object $D \in \Ob(\CC)$ and morphisms
$h \in \hom_\CC(B, D)$ and $k \in \hom_\CC(C, D)$ such that
$h \cdot f = k \cdot g$:
\begin{center}
  \begin{tikzcd}
    C \arrow[r, "k"] & D \\
    A \arrow[r, "f"'] \arrow[u, "g"] & B \arrow[u, "h"']
  \end{tikzcd}
\end{center}
Let $\Aut_\CC(A)$, the \emph{automorphism group} of $A \in \Ob(\CC)$,
be the set of all the invertible morphisms in $\hom_\CC(A, A)$.

A category $\DD$ is a \emph{cofinal subcategory} of a category $\CC$ if $\DD$ is a full subcategory of $\CC$
and for every $C \in \Ob(\CC)$ there is a $D \in \Ob(\DD)$ such that $C \toCC\CC D$.
The \emph{product} of categories $\CC$ and $\DD$ will be denoted by $\CC \times \DD$
and the iterated product $\underbrace{\CC \times \CC \times \ldots \times \CC}_n$
by $\CC^n$.

In all situations where the discussion obviously takes place in a
single category, usually indicated at the beginning of the paragraph or in the formulation of the statement,
instead of $A \toCC\CC B$, $\hom_\CC(A, B)$, $\upset{\CC} A$ \ldots\ we shall simply write $A \to B$, $\hom(A, B)$, $\upset{} A$ \ldots

\paragraph{Ramsey phenomena in a category.}
Let $\CC$ be a locally small category and let $A, B \in \Ob(\CC)$.
We think of $\hom(A, B)$ as the set of embeddings $A \to B$,
so we shall often require that all the morphisms in $\CC$ be mono.
Two morphisms $f, g \in \hom(A, B)$ ``point'' to the same subobject of $B$ isomorphic to $A$ if they
``differ by an automorphism''. More formally, write
$f \sim_A g$ to denote that there is an $\alpha \in \Aut(A)$ such that $f = g \cdot \alpha$.
It is easy to see that $\sim_A$ is an equivalence relation on $\hom(A, B)$, so we let
$
  \binom BA = \hom(A, B) / \Boxed{\sim_A}
$
be the set of all the \emph{subobjects of $B$ isomorphic to $A$}.

For a $k \in \NN$, a \emph{$k$-coloring} of a set $S$ is any mapping $\chi : S \to K$, where
$K$ is a finite set with $|K| = k$. Often it will be convenient to take
$k = \{0, 1,\ldots, k-1\}$ as the set of colors.
Each coloring $\chi : X \to k$ determines a partition $\Pi(\chi) = \{\chi^{-1}(i) : i < k\} \setminus \{\0\}$ of~$X$.
Conversely, each partition $\Pi = \{\beta_0, \beta_1, \ldots, \beta_{k-1}\} \in \Part(X)$ determines a coloring
$\chi_\Pi : X \to k$ by $\chi_\Pi(\beta_i) = i$. Note that $\chi_\Pi$ is determined only up to a permutation of blocks of~$\Pi$.

For an integer $k \in \NN$ and $A, B, C \in \Ob(\CC)$ we write
$
  C \overset\sim\longrightarrow (B)^{A}_{k, t}
$
to denote that for every $k$-coloring
$
  \chi : \subobj{} AC \to k
$
there is a morphism $w : B \to C$ such that $|\chi(w \cdot \subobj{} AB)| \le t$.
(Note that $w \cdot (f / \Boxed{\sim_A}) = (w \cdot f) / \Boxed{\sim_A}$ for $f / \Boxed{\sim_A} \in \subobj{}AB$;
therefore, we shall simply write $f \cdot g / \Boxed{\sim_A}$.)
Instead of $C \overset\sim\longrightarrow (B)^{A}_{k, 1}$ we simply write
$C \overset\sim\longrightarrow (B)^{A}_{k}$.

For $A \in \Ob(\CC)$ let $\tilde t_\CC(A)$ denote the least positive integer $n$ such that
for all $k \in \NN$ and all $B \in \Ob(\CC)$ there exists a $C \in \Ob(\CC)$ such that
$C \overset\sim\longrightarrow (B)^{A}_{k, n}$, if such an integer exists.
Otherwise put $\tilde t_\CC(A) = \infty$. Then $\tilde t_\CC(A)$ is referred to as the
\emph{structural Ramsey degree of $A$ in $\CC$}. 
A category $\CC$ \emph{has finite structural Ramsey degrees} if $\tilde t_\CC(A) < \infty$ for all $A \in \Ob(\CC)$.
An $A \in \Ob(\CC)$ is a \emph{Ramsey object in $\CC$} if $\tilde t_\CC(A) = 1$.
A $B \in \Ob(\CC)$ is a \emph{subramsey object in $\CC$} if $B \to A$ for some Ramsey object~$A$.
A locally small category $\CC$ has the
\emph{structural Ramsey property} if $\tilde t_\CC(A) = 1$ for all $A \in \Ob(\CC)$.

The notion of embedding Ramsey degrees can be introduced analogously and proves to be much more
convenient when it comes to actual calculations. We write
$
  C \longrightarrow (B)^{A}_{k,t}
$
to denote that for every $k$-coloring
$
  \chi : \hom(A, C) \to k
$
there is a morphism $w : B \to C$ such that $|\chi(w \cdot \hom(A, B))| \le t$.
Instead of $C \longrightarrow (B)^{A}_{k, 1}$ we simply write
$C \longrightarrow (B)^{A}_{k}$.

For $A \in \Ob(\CC)$ let $t_\CC(A)$ denote the least positive integer $n$ such that
for all $k \in \NN$ and all $B \in \Ob(\CC)$ there exists a $C \in \Ob(\CC)$ such that
$C \longrightarrow (B)^{A}_{k, n}$, if such an integer exists.
Otherwise put $t_\CC(A) = \infty$. Then $t_\CC(A)$ is referred to as the
\emph{embedding Ramsey degree of $A$ in $\CC$}. 
A locally small category $\CC$ has the
\emph{embedding Ramsey property} if $t_\CC(A) = 1$ for all $A \in \Ob(\CC)$.

The following relationship between structural and embedding Ramsey degrees
was proved for relational structures in \cite{Zucker-1} and generalized to this form in \cite{masul-kpt}.

\begin{PROP}\label{rdbas.prop.sml} (\cite{Zucker-1,masul-kpt})
  Let $\CC$ be a locally small category whose morphisms are mono
  and let $A \in \Ob(\CC)$. Then $t(A)$ is finite if and only if both $\tilde t(A)$ and $\Aut(A)$ are finite,
  and in that case
  $
    t(A) = |\Aut(A)| \cdot \tilde t(A)
  $.
\end{PROP}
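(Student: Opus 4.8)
The plan is to work with the canonical quotient map $q\colon\hom(A,C)\to\binom CA$, $f\mapsto f/\sim_A$, whose fibres are exactly the $\sim_A$-classes. The single fact on which everything rests, which I would establish first, is that because all morphisms are mono every fibre has exactly $|\Aut(A)|$ elements: for a fixed $f$ the map $\alpha\mapsto f\cdot\alpha$ is a bijection $\Aut(A)\to f/\sim_A$, surjective by definition of the class and injective precisely because $f$ is mono. Writing $a=|\Aut(A)|$, this says $q$ is uniformly $a$-to-one, and it is this constant fibre size that will produce the multiplicative factor. The same computation records that the fibre of $q$ over $(w\cdot g)/\sim_A$ is $\{w\cdot g\cdot\alpha:\alpha\in\Aut(A)\}$, so that colourings upstairs and downstairs can be compared copy by copy. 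I would then prove the equality $t(A)=a\cdot\tilde t(A)$ assuming both $a$ and $\tilde t(A)$ finite (which yields the implication that finiteness of the two forces finiteness of $t(A)$), and treat the converse implication separately.

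For $t(A)\le a\cdot\tilde t(A)$: given an embedding $k$-colouring $\chi\colon\hom(A,C)\to k$, I would push it down to the colouring $\chi^\ast$ of $\binom CA$ assigning to each subobject the \emph{set} of $\chi$-colours occurring on its fibre. One checks $\chi^\ast$ is well defined (independent of the chosen representative, since $\Aut(A)$ permutes the fibre) and uses at most $2^k$ colours. Feeding $B$ and $2^k$ into $\tilde t(A)$ produces a $C$ and a morphism $w\colon B\to C$ with $|\chi^\ast(w\cdot\binom BA)|\le\tilde t(A)$; since each fibre carries at most $a$ distinct $\chi$-colours, a union bound gives $|\chi(w\cdot\hom(A,B))|\le a\cdot\tilde t(A)$, i.e. $C\longrightarrow(B)^A_{k,\,a\tilde t(A)}$.

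For the reverse inequality $t(A)\ge a\cdot\tilde t(A)$ I would lift bad structural colourings. There exist $B^\circ,k^\circ$ such that every $C$ admits a structural $k^\circ$-colouring $\psi$ with $|\psi(w\cdot\binom{B^\circ}A)|\ge\tilde t(A)$ for all $w$ (this is minimality of $\tilde t(A)$ when $\tilde t(A)\ge 2$, and for $\tilde t(A)=1$ it holds trivially by taking $B^\circ=A$ and a constant $\psi$). Choosing a section of $q$, every $f\in\hom(A,C)$ factors uniquely as $f=r_{f/\sim_A}\cdot\alpha_f$ with $\alpha_f\in\Aut(A)$, and I would colour $f$ by the pair $(\psi(f/\sim_A),\alpha_f)$, a $k^\circ a$-colouring. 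The decisive point is that on any single fibre the second coordinate runs through all of $\Aut(A)$, so every structural colour present on $w\cdot\hom(A,B^\circ)$ splits into exactly $a$ embedding colours; hence $|\chi(w\cdot\hom(A,B^\circ))|=a\cdot|\psi(w\cdot\binom{B^\circ}A)|\ge a\tilde t(A)$ for every $w$, defeating level $a\tilde t(A)-1$ for all $C$.

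Finally the finiteness equivalence. If $t(A)<\infty$ then $\tilde t(A)\le t(A)$, by lifting any structural colouring $\psi$ to the fibre-constant embedding colouring $f\mapsto\psi(f/\sim_A)$, for which $|\chi(w\cdot\hom(A,B))|=|\psi(w\cdot\binom BA)|$ copy by copy. That $\Aut(A)$ must also be finite I would prove by contraposition: if $\Aut(A)$ is infinite, take $B=A$; then every $\sim_A$-class in $\hom(A,C)$ is infinite, so for any target $t$ I can colour each class surjectively onto $\{0,\dots,t\}$, forcing $|\chi(w\cdot\hom(A,A))|\ge t+1$ for every $w$ and every $C$, whence $t(A)=\infty$. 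The main obstacle throughout is the bookkeeping of the lower bound: making the position tag $\alpha_f$ well defined through a section and verifying that it sweeps out all of $\Aut(A)$ on each fibre. This is exactly the place where mono-ness (freeness of the $\Aut(A)$-action) is indispensable, and where the exact factor $a$, rather than a mere inequality, is forced.
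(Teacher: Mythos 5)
The paper does not prove Proposition~\ref{rdbas.prop.sml}; it is quoted from \cite{Zucker-1,masul-kpt} without proof, so there is no internal argument to compare against. Your proof is correct and is essentially the standard argument from those references: the free $\Aut(A)$-action on each $\sim_A$-class (freeness coming from mono-ness) makes the quotient $\hom(A,C)\to\binom CA$ uniformly $|\Aut(A)|$-to-one, the power-set colouring gives $t(A)\le|\Aut(A)|\cdot\tilde t(A)$, the section-plus-position-tag colouring gives the reverse inequality, and the two degenerate directions of the finiteness equivalence are handled exactly as they should be.
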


\paragraph{Convention.}
Let $\NN_\infty = \NN \union \{\infty\} = \{1, 2, 3, \ldots, \infty \}$.
The usual linear order on the positive integers extends to $\NN_\infty$ straightforwardly:
$
  1 < 2 < \ldots < \infty
$.
Ramsey degrees take their values in $\NN_\infty$, so when we write
$t_1 \ge t_2$ for some Ramsey degrees $t_1$ and $t_2$ then
  $t_1, t_2 \in \NN$ and $t_1 \ge t_2$; or
  $t_1 = \infty$ and $t_2 \in \NN$; or
  $t_1 = t_2 = \infty$.
For notational convenience, if $A$ is an infinite set we shall simply write
$|A| = \infty$ regardless of the actual cardinal~$|A|$. Hence, if $t$ is a Ramsey degree
and $A$ is a set, by $t \ge |A|$ we mean the following:
  $t \in \NN$, $|A| \in \NN$ and $t \ge |A|$; or
  $t = \infty$ and $|A| \in \NN$; or
  $A$ is an infinite set and $t = \infty$.
On the other hand, if $A$ and $B$ are sets then $|A| \ge |B|$ has the usual meaning.

With this convention in mind Proposition~\ref{rdbas.prop.sml} takes the following much simpler form:
$t(A) = |\Aut(A)| \cdot \tilde t(A)$ for all $A \in \Ob(\CC)$.

The following is a technical result from~\cite{masul-rpppg} which will
form the basis for the key properties of the Ramsey entropy.

\begin{THM}\label{rpppg.thm.srd-mult} \cite{masul-rpppg}
  Let $\CC_1$ and $\CC_2$ be categories whose morphisms are mono and homsets are finite. Then,
  with Convention in mind, the following holds for all $A_1 \in \Ob(\CC_1)$ and $A_2 \in \Ob(\CC_2)$:
  \begin{itemize}
    \item $t_{\CC_1 \times \CC_2}(A_1, A_2) = t_{\CC_1}(A_1) \cdot t_{\CC_2}(A_2)$, and
    \item $\tilde t_{\CC_1 \times \CC_2}(A_1, A_2) = \tilde t_{\CC_1}(A_1) \cdot \tilde t_{\CC_2}(A_2)$.
  \end{itemize}
\end{THM}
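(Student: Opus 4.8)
The plan is to prove the identity for the embedding Ramsey degree first and then transfer it to the structural degree. Write $t_i = t_{\CC_i}(A_i)$ throughout. Two structural facts make the product manageable: morphisms in $\CC_1 \times \CC_2$ are pairs, so $\hom((A_1,A_2),(C_1,C_2)) = \hom(A_1,C_1) \times \hom(A_2,C_2)$ with composition acting coordinatewise; and likewise $\Aut_{\CC_1\times\CC_2}(A_1,A_2) = \Aut(A_1)\times\Aut(A_2)$, so the relation $\sim_{(A_1,A_2)}$ splits coordinatewise and $\binom{(B_1,B_2)}{(A_1,A_2)} \cong \binom{B_1}{A_1}\times\binom{B_2}{A_2}$. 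I would establish $t_{\CC_1\times\CC_2}(A_1,A_2)=t_1t_2$ by proving the two inequalities separately, and then obtain the structural statement either by rerunning the same proof with $\binom{\cdot}{\cdot}$ in place of $\hom$, or from Proposition~\ref{rdbas.prop.sml} together with the product formula for $\Aut$.

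For the upper bound $t_{\CC_1\times\CC_2}(A_1,A_2)\le t_1t_2$ (assume $t_1,t_2$ finite), I would run a nested product-Ramsey argument with a carefully metered number of colours. Given $k$ and $B_1,B_2$, first choose $C_1$ with $C_1\longrightarrow(B_1)^{A_1}_{k^{t_2},t_1}$ and set $m=|\hom(A_1,C_1)|$; then choose $C_2$ with $C_2\longrightarrow(B_2)^{A_2}_{k^m,t_2}$. Both exist by the definition of the Ramsey degree, and the order is acyclic since $k^{t_2}$ is fixed in advance while $m$ is fixed once $C_1$ is. Given a $k$-colouring $\chi$ of $\hom(A_1,C_1)\times\hom(A_2,C_2)$, read it first along the second coordinate: the map $g\mapsto(\chi(f,g))_{f\in\hom(A_1,C_1)}$ is a colouring of $\hom(A_2,C_2)$ with $k^m$ colours, so there is $w_2:B_2\to C_2$ after which the column functions $f\mapsto\chi(f,w_2g')$, $g'\in\hom(A_2,B_2)$, take at most $t_2$ distinct values $v_1,\dots,v_r$, $r\le t_2$. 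Bundling these into $\psi:\hom(A_1,C_1)\to k^{t_2}$, $\psi(f)=(v_1(f),\dots,v_r(f))$, and applying $C_1\longrightarrow(B_1)^{A_1}_{k^{t_2},t_1}$ yields $w_1:B_1\to C_1$ after which $\psi$ takes at most $t_1$ values on $w_1\cdot\hom(A_1,B_1)$. For the copy $(w_1,w_2)$ the colour $\chi(w_1f',w_2g')$ equals the $j$-th coordinate of $\psi(w_1f')$, where $j\le r$ is the index of the column function attached to $g'$; hence it is a function of the pair (value of $\psi(w_1f')$, index $j$), of which there are at most $t_1\cdot r\le t_1t_2$. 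Thus $(C_1,C_2)\longrightarrow(B_1,B_2)^{(A_1,A_2)}_{k,t_1t_2}$.

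The lower bound $t_{\CC_1\times\CC_2}(A_1,A_2)\ge t_1t_2$ is the easy direction. By minimality, $t_i-1$ fails, so there are $k_i$ and $B_i$ such that for every $C_i$ some $k_i$-colouring $\chi_i$ of $\hom(A_i,C_i)$ forces at least $t_i$ colours on every copy of $B_i$. Taking $k=k_1k_2$, the object $(B_1,B_2)$, and the product colouring $\chi(f_1,f_2)=(\chi_1(f_1),\chi_2(f_2))$, every copy $(w_1,w_2)$ sees exactly the product set $\{\chi_1(w_1f_1'):f_1'\}\times\{\chi_2(w_2f_2'):f_2'\}$, of size at least $t_1t_2$ because the coordinates vary independently; so no $(C_1,C_2)$ witnesses degree $t_1t_2-1$. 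The cases with an infinite factor are handled, in the spirit of the Convention, by a projection colouring: if $t_1=\infty$ then for each $n$ a factor-one witness $\chi_1$ forcing more than $n$ colours, combined with $B_2=A_2$ (so $\hom(A_2,A_2)\ne\0$) and $\chi(f_1,f_2)=\chi_1(f_1)$, forces more than $n$ colours on every copy of $(B_1,A_2)$; hence the product degree is $\infty=t_1t_2$, and symmetrically for $t_2=\infty$.

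I expect the main obstacle to be the bookkeeping in the upper bound: one must meter the colours in the two nested applications correctly ($k^{t_2}$ in the first factor and $k^{m}$ in the second, in this order, so that neither choice depends circularly on the other) and verify that the colour of a product copy is genuinely a function of the pair (value of $\psi$, column index), which is exactly what pins the count down to $t_1t_2$. Everything else is routine once this is in place: since homsets, automorphism groups, and subobject sets all factor over the product and composition acts coordinatewise, the entire argument transfers verbatim with $\binom{\cdot}{\cdot}$ replacing $\hom$, giving $\tilde t_{\CC_1 \times \CC_2}(A_1,A_2) = \tilde t_{\CC_1}(A_1)\cdot\tilde t_{\CC_2}(A_2)$; alternatively this follows from the embedding identity via Proposition~\ref{rdbas.prop.sml} and $|\Aut_{\CC_1\times\CC_2}(A_1,A_2)| = |\Aut(A_1)|\cdot|\Aut(A_2)|$.
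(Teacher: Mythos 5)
Your statement is one the paper does not prove at all: Theorem~\ref{rpppg.thm.srd-mult} is imported as a black box from the reference \cite{masul-rpppg}, so there is no in-paper argument to compare yours against. Judged on its own, your proof is correct and is the argument one would expect. The lower bound via the product colouring $\chi(f_1,f_2)=(\chi_1(f_1),\chi_2(f_2))$ is right: since $\hom((A_1,A_2),(C_1,C_2))=\hom(A_1,C_1)\times\hom(A_2,C_2)$ and $(w_1,w_2)$ acts coordinatewise, the colour set on any copy is a genuine Cartesian product of the two coordinate colour sets, giving at least $t_1t_2$ colours; and your projection colouring handles the infinite cases consistently with the paper's Convention. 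The upper bound is the standard nested (``product Ramsey'') argument, and you have the one delicate point right: the colour budgets must be metered acyclically, with $C_1\longrightarrow(B_1)^{A_1}_{k^{t_2},t_1}$ chosen first (the exponent $t_2$ is known in advance) and $C_2\longrightarrow(B_2)^{A_2}_{k^{m},t_2}$ chosen second with $m=|\hom(A_1,C_1)|$ (finiteness of homsets is used exactly here), after which the colour of a product copy is a function of the pair (value of $\psi$, column index), bounding the count by $t_1t_2$. The transfer to structural degrees via Proposition~\ref{rdbas.prop.sml} and $|\Aut(A_1,A_2)|=|\Aut(A_1)|\cdot|\Aut(A_2)|$ is clean (rerunning the argument with subobjects in place of hom-sets also works, since $\sim_{(A_1,A_2)}$ splits coordinatewise). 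Only routine degenerate cases (empty homsets) are glossed over, and they do not affect the argument.
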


We focus next on the monotonicity of Ramsey degrees. The main result we shall need,
Lemma~\ref{rament.lem.t-monotonous}, was first proved in \cite{Zucker-1} for amalgamation classes of structures,
and reproved in a rather convoluted manner in \cite{masul-kpt} for locally small directed categories
with amalgamation. Here, we give a short direct proof of a slightly more general statement
(we no longer require the category to be directed). We start with a technical statement.

\begin{LEM}\label{rament.lem.APext}
  Let $\CC$ be a category with amalgamation whose homsets are finite.
  For all $A, B, C \in \Ob(\CC)$ and every morphism $f \in \hom(A, B)$ there is a $D \in \Ob(\CC)$
  and a morphism $w \in \hom(C, D)$ such that $w \cdot \hom(A, C) \subseteq \hom(B, D) \cdot f$.
\end{LEM}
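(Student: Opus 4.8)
The plan is to exploit the finiteness of $\hom(A, C)$ and to resolve the morphisms in it one at a time by repeated amalgamation. Note first that a single application of amalgamation to $f \in \hom(A, B)$ and one fixed $g \in \hom(A, C)$ already produces $D$, $w \in \hom(C, D)$ and $h \in \hom(B, D)$ with $w \cdot g = h \cdot f$, hence $w \cdot g \in \hom(B, D) \cdot f$. The difficulty is that we need a \emph{single} $w$ that works simultaneously for \emph{all} $g \in \hom(A, C)$, and a lone amalgamation handles only one of them.

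The observation that makes an iterative construction work is that the property of factoring through $f$ is stable under post-composition: if $w \cdot g = h \cdot f$ for some $h \in \hom(B, D)$ and $v \in \hom(D, E)$ is arbitrary, then $(v \cdot w) \cdot g = (v \cdot h) \cdot f \in \hom(B, E) \cdot f$. Thus once a morphism of $\hom(A, C)$ has been resolved, it remains resolved after any further post-composition, and we may safely keep enlarging the target object.

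First I would enumerate $\hom(A, C) = \{g_1, \ldots, g_n\}$, which is a finite set by hypothesis; if it is empty, take $D = C$ and $w = \id_C$, and there is nothing to prove. Then I would build a chain $C = D_0 \to D_1 \to \cdots \to D_n = D$ by induction, maintaining the invariant that the composite $w_i \in \hom(C, D_i)$ satisfies $w_i \cdot g_j \in \hom(B, D_i) \cdot f$ for all $j \le i$. Starting from $w_0 = \id_C$ (for which the invariant is vacuous), at step $i$ I apply amalgamation to the span consisting of $f \in \hom(A, B)$ and $w_{i-1} \cdot g_i \in \hom(A, D_{i-1})$, obtaining $D_i$, a morphism $v_i \in \hom(D_{i-1}, D_i)$ and some $h \in \hom(B, D_i)$ with $v_i \cdot (w_{i-1} \cdot g_i) = h \cdot f$; I then set $w_i = v_i \cdot w_{i-1}$. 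This resolves $g_i$ directly, while each previously resolved $g_j$ with $j < i$ stays resolved by the post-composition stability of the previous paragraph.

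Setting $w = w_n$ and $D = D_n$ then yields $w \cdot g_j \in \hom(B, D) \cdot f$ for every $j$, that is, $w \cdot \hom(A, C) \subseteq \hom(B, D) \cdot f$, as required. Finiteness of homsets is used only to guarantee that the induction terminates after finitely many amalgamation steps; amalgamation supplies each individual step and the post-composition remark glues the steps together without disturbing earlier ones. I expect the main point to be conceptual rather than computational: recognizing that one amalgamation is insufficient and that the factoring property must be shown to persist as the construction grows.
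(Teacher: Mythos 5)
Your proof is correct, and it uses the same two ingredients as the paper's argument --- finiteness of $\hom(A,C)$ and one application of amalgamation per element $g_i$ --- but organizes the amalgamations differently. The paper proceeds in parallel: it first amalgamates $f$ with each $g_i$ separately, producing squares $p_i \cdot g_i = q_i \cdot f$ with apexes $D_1, \ldots, D_n$, and then uses further amalgamations to merge all the $D_i$ into a single $D$ over $C$, taking $w$ to be the common composite $s_1 \cdot p_1 = \cdots = s_n \cdot p_n$, so that $w \cdot g_i = s_i \cdot q_i \cdot f$ falls out directly. You instead proceed sequentially, building a telescoping chain $C = D_0 \to D_1 \to \cdots \to D_n$ in which each stage resolves one $g_i$, and you rely on the (correct and explicitly stated) observation that a factorization through $f$ is preserved under post-composition. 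The two arguments are interchangeable in this setting; yours is marginally more economical ($n$ binary amalgamations rather than $2n-1$) and avoids having to arrange that the $s_i \cdot p_i$ all coincide, at the price of carrying an induction invariant through the chain. You also treat the degenerate case $\hom(A,C) = \0$ explicitly, which the paper leaves implicit.
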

\begin{proof}
  Let $\hom(A, C) = \{g_1, g_2, \ldots, g_n\}$. By the amalgamation property, for every $i \in \{1, \ldots, n\}$
  there is a $D_i \in \Ob(\CC)$ and morphisms $p_i \in \hom(C, D_i)$ and $q_i \in \hom(B, D_i)$ such that
  \begin{center}
    \begin{tikzcd}
      B \arrow[r, "q_i"] & D_i \\
      A \arrow[u, "f"] \arrow[r, "g_i"'] & C \arrow[u, "p_i"']
    \end{tikzcd}
  \end{center}
  Moreover, by the repeated use of the amalgamation property there is a $D \in \Ob(\CC)$ and morphisms
  $s_i \in \hom(D_i, D)$, $1 \le i \le n$, such that
  \begin{center}
    \begin{tikzcd}
         &    & D & \\
      D_1 \arrow[urr, "s_1"] & D_2 \arrow[ur, "s_2"'] & {\cdots} & D_n \arrow[ul, "s_n"'] \\
         &    & C \arrow[ull, "p_1"] \arrow[ul, "p_2"'] \arrow[ur, "p_n"']  & \\
    \end{tikzcd}
  \end{center}
  Let $w = s_1 \cdot p_1 = \ldots = s_n \cdot p_n \in \hom(C, D)$. To see that $D$ and $w$ satisfy the requirements of the
  lemma take any $g_i \in \hom(A, C)$ and note that
  $$
    w \cdot g_i = s_i \cdot p_i \cdot g_i = s_i \cdot q_i \cdot f \in \hom(B, D) \cdot f.
  $$
  This concludes the proof.
\end{proof}

\begin{LEM}\label{rament.lem.t-monotonous} (cf.~\cite{Zucker-1,masul-kpt})
  Let $\CC$ be a category with amalgamation whose morphisms are mono and whose homsets are finite.
  Then for all $A_1, A_2 \in \Ob(\CC)$, if $A_1 \to A_2$ then $t(A_1) \le t(A_2)$.
\end{LEM}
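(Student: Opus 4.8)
The plan is to reduce immediately to the only nontrivial case and then build the required object by hand. If $t(A_2) = \infty$ the inequality $t(A_1) \le t(A_2)$ holds vacuously, so I set $t := t(A_2)$ and assume $t \in \NN$, aiming to show $t(A_1) \le t$. Unwinding the definition of the embedding Ramsey degree, this means: for every $k \in \NN$ and every $B \in \Ob(\CC)$ I must exhibit a $C \in \Ob(\CC)$ with $C \longrightarrow (B)^{A_1}_{k, t}$. I fix such a $k$ and $B$, and I fix once and for all an embedding $f \in \hom(A_1, A_2)$ witnessing $A_1 \to A_2$.

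The first step is to absorb the gap between $A_1$ and $A_2$ using Lemma~\ref{rament.lem.APext}. Applying that lemma to the triple $(A_1, A_2, B)$ and the morphism $f$ produces an object $D$ and a morphism $w_0 \in \hom(B, D)$ with
\[
  w_0 \cdot \hom(A_1, B) \subseteq \hom(A_2, D) \cdot f .
\]
In words: after post-composition with $w_0$, every embedding of $A_1$ into $B$ factors through $f$, i.e.\ extends to an embedding of $A_2$ into $D$. This factorization is the alignment that will let a coloring of the $A_2$-embeddings control the $A_1$-embeddings.

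The second step invokes the finite Ramsey degree of $A_2$ with target $D$: since $t(A_2) = t$, there is a $C \in \Ob(\CC)$ with $C \longrightarrow (D)^{A_2}_{k, t}$. I then claim this $C$ works for $A_1$ and $B$. Given any $k$-coloring $\chi : \hom(A_1, C) \to k$, I transport it along $f$ to a coloring $\chi'$ of $\hom(A_2, C)$ by $\chi'(h) = \chi(h \cdot f)$. By the choice of $C$ there is a $u \in \hom(D, C)$ with $|\chi'(u \cdot \hom(A_2, D))| \le t$, and I set $v := u \cdot w_0 \in \hom(B, C)$. To finish I check that $\chi(v \cdot \hom(A_1, B))$ uses at most $t$ colors: for $g \in \hom(A_1, B)$ the Step-1 inclusion gives $w_0 \cdot g = h \cdot f$ for some $h \in \hom(A_2, D)$, so $v \cdot g = u \cdot h \cdot f$ and hence $\chi(v \cdot g) = \chi'(u \cdot h) \in \chi'(u \cdot \hom(A_2, D))$. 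Thus $\chi(v \cdot \hom(A_1, B)) \subseteq \chi'(u \cdot \hom(A_2, D))$, a set of at most $t$ colors, which yields $C \longrightarrow (B)^{A_1}_{k, t}$ and therefore $t(A_1) \le t = t(A_2)$.

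I do not expect a serious obstacle once Lemma~\ref{rament.lem.APext} is available, as the construction is a direct one; the one point that must be handled carefully is the \emph{direction} of the factorization. Monotonicity asks us to bound the degree of the smaller object $A_1$ by that of the larger $A_2$, yet the only natural way to move a coloring between them — precomposition with $f$ — sends colorings of $A_2$-embeddings to colorings of $A_1$-embeddings, the opposite direction. Lemma~\ref{rament.lem.APext} is exactly what reconciles this tension: at the cost of enlarging $B$ to $D$, it forces the relevant $A_1$-embeddings into $B$ to appear as $f$-restrictions of $A_2$-embeddings into $D$, so that $t$-oligochromaticity for $A_2$ descends to $A_1$. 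Getting this inclusion and the associated bookkeeping of composites right is the crux of the argument.
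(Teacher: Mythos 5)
Your proposal is correct and follows essentially the same route as the paper's own proof: reduce to the finite case, fix $f \in \hom(A_1,A_2)$, use Lemma~\ref{rament.lem.APext} to obtain the inclusion $w_0 \cdot \hom(A_1,B) \subseteq \hom(A_2,D)\cdot f$, invoke $t(A_2)=t$ for the enlarged object $D$, and transport colorings by precomposition with $f$. Only the variable names differ.
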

\begin{proof}
  Let $t(A_2) = t \in \NN$ (the case $t(A_2) = \infty$ is trivial) and let us show that $t(A_2) \le t$.
  Take any $k \in \NN$ and $B_1 \in \Ob(\CC)$ so that $A_1 \toCC{} B_1$.
  Since $A_1 \toCC A_2$ choose and fix an $f \in \hom(A_1, A_2)$.
  By Lemma~\ref{rament.lem.APext} there is a $B_2 \in \Ob(\CC)$ and a $w \in \hom(B_1, B_2)$
  such that
  \begin{equation}\label{rament.eq.x-21}
    w \cdot \hom(A_1, B_1) \subseteq \hom(A_2, B_2) \cdot f.
  \end{equation}
  Since $t(A_2) = t$, for $B_2$ we have just constructed there is a $C \in \Ob(\CC)$ such that
  \begin{align}
    (\forall \chi_2 : \hom(A_2, C) \to k) (\exists w_2 &\in \hom(B_2, C))\nonumber\\
             &|\chi_2(w_2 \cdot \hom(A_2, B_2))| \le t. \label{rament.eq.x-22}
  \end{align}
  Now, take any $\chi_1 : \hom(A_1, C) \to k$ and define
  $\chi_2 : \hom(A_2, C) \to k$ by
  $$
    \chi_2(h) = \chi_1(h \cdot f).
  $$
  By \eqref{rament.eq.x-22} there is a $w_2 \in \hom(B_2, C)$ such that
  $$
    |\chi_2(w_2 \cdot \hom(A_2, B_2))| \le t.
  $$
  Finally, let $w' = w_2 \cdot w$. Then:
  \begin{align*}
    |\chi_1(w' \cdot \hom(A_1, B_1))|
    & = |\chi_1(w_2 \cdot w \cdot \hom(A_1, B_1))|\\
    & \le |\chi_1(w_2 \cdot \hom(A_2, B_2) \cdot f)| && \text{[by \eqref{rament.eq.x-21}]}\\
    & = |\chi_2(w_2 \cdot \hom(A_2, B_2))| && \text{[definition of $\chi_2$]}\\
    & \le t.
  \end{align*}
  This concludes the proof.
\end{proof}

\section{Ramsey entropy}
\label{rament.sec.rament}

In this section we introduce the notion of Ramsey entropy.
We develop the basic notions in a very general setting -- that of small categories whose morphisms
are mono and homsets are finite. Our notion of entropy is based on the fact that Ramsey degrees
are closely related to special partitions of subobjects.
We start by introducing the necessary infrastructure, and then prove that the notion we have introduced
possesses some fundamental properties typical for nonadditive entropies: it is monotonous and subadditive.

Fix a locally small category $\CC$ whose morphisms are mono.
Let $A, B, C \in \Ob(\CC)$ be chosen so that $A \to B \to C$ and let $w \in \hom(B, C)$.
Let
$$
  \ell_w : \hom(A, B) \to \hom(A, C) : f \mapsto w \cdot f
$$
denote the left multiplication by $w$.
Then for $F \subseteq \hom(A, C)$, $\beta \subseteq \subobj{} A C$ and $\Pi \in \Part\subobj{} A C$ let
\begin{align*}
  \ell_w^{-1}(F) &= \{f \in \hom(A, B) : \ell_w(f) \in F\}, \\
  \ell_w^{-1}(\beta) &= \textstyle \{\ell_w^{-1}(F) : F \in \beta \} \setminus \{\0\}, \\
  \ell_w^{-1}(\Pi) &= \{\ell_w^{-1}(\beta) : \beta \in \Pi\} \setminus \{\0\}.
\end{align*}
The following lemma ensures that $\ell_w^{-1}(\Pi) \in \Part\subobj{} AB$ whenever $\Pi \in \Part\subobj{} AC$
and $w \in \hom(B, C)$. This will, then, ensure that the main definition of the paper is correct.

\begin{LEM}\label{rament.lem.basic-prop}
  Let $\CC$ be a locally small category whose morphisms are mono.
  Let $A, B, C \in \Ob(\CC)$ be chosen so that $A \to B \to C$ and let $w \in \hom(B, C)$.
  
  $(a)$ For every $g \in \hom(A, B)$, $\ell_w^{-1}(w \cdot g / \Boxed{\sim_A}) = g / \Boxed{\sim_A}$.

  $(b)$ For every $f \in \hom(A, C)$ either $\ell_w^{-1}(f / \Boxed{\sim_A}) = \0$ or
  there is a $g \in \hom(A, B)$ such that $\ell_w^{-1}(f / \Boxed{\sim_A}) = g / \Boxed{\sim_A}$.

  $(c)$ For every $\Pi \in \Part\subobj{} A C$ we have that $\ell_w^{-1}(\Pi) \in \Part\subobj{} A B$.

  $(d)$ For every $k \in \NN$, $\chi : \subobj{}AC \to k$ and $\Lambda \in \Part\subobj{} AB$:
        $\Lambda \succeq \ell_w^{-1}(\Pi(\chi))$ if and only if
        $f / \Boxed{\sim_A} \equiv_\Lambda g / \Boxed{\sim_A} \Rightarrow \chi(w \cdot f / \Boxed{\sim_A}) =
        \chi(w \cdot g / \Boxed{\sim_A})$.
\end{LEM}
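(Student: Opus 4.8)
The plan is to notice that all four parts are controlled by a single auxiliary map at the level of subobjects, and that the hypothesis that $w$ be mono is exactly what makes that map injective; parts (a)--(c) then set up the dictionary that makes (d) a formal unwinding of the refinement order.

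First I would dispatch (a) by direct computation. Since $\ell_w^{-1}(w\cdot g/\Boxed{\sim_A})=\{f\in\hom(A,B):w\cdot f\in w\cdot g/\Boxed{\sim_A}\}$, an $f$ lies in this set iff $w\cdot f=w\cdot g\cdot\alpha$ for some $\alpha\in\Aut(A)$; because $w$ is mono this forces $f=g\cdot\alpha$, so the set is precisely $g/\Boxed{\sim_A}$. Part (b) is then a one-line consequence: if $\ell_w^{-1}(f/\Boxed{\sim_A})\ne\0$, choose $g$ in it, observe $w\cdot g\in f/\Boxed{\sim_A}$ so that $f/\Boxed{\sim_A}=w\cdot g/\Boxed{\sim_A}$, and invoke (a).

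The conceptual core is to introduce the induced map $\hat\ell_w:\subobj{}{A}{B}\to\subobj{}{A}{C}$, $g/\Boxed{\sim_A}\mapsto w\cdot g/\Boxed{\sim_A}$, and check it is well defined (if $g=g'\cdot\alpha$ then $w\cdot g=w\cdot g'\cdot\alpha$) and injective (if $w\cdot g\sim_A w\cdot g'$ then $w\cdot g=w\cdot g'\cdot\alpha$, whence $g=g'\cdot\alpha$ by monicity). The identification I would extract from (a) and (b) is that for any block $\beta\subseteq\subobj{}{A}{C}$ one has $\ell_w^{-1}(\beta)=\hat\ell_w^{-1}(\beta)$, so that $\ell_w^{-1}(\Pi)$ is nothing but the pullback of $\Pi$ along $\hat\ell_w$. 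Part (c) then reduces to the standard fact that the preimage of a partition under any total map is a partition of the domain: the blocks are nonempty because the definition strips $\0$ via $\setminus\{\0\}$, the union is everything because $\hat\ell_w$ is defined on all of $\subobj{}{A}{B}$, and two blocks $\ell_w^{-1}(\beta_i),\ell_w^{-1}(\beta_j)$ that share a class $g/\Boxed{\sim_A}$ force $w\cdot g/\Boxed{\sim_A}\in\beta_i\cap\beta_j$, hence $\beta_i=\beta_j$.

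For (d) I would first unwind the order: $\Lambda\succeq\ell_w^{-1}(\Pi(\chi))$ says $\Lambda$ is finer than $\ell_w^{-1}(\Pi(\chi))$, equivalently $x\equiv_\Lambda y\Rightarrow x\equiv_{\ell_w^{-1}(\Pi(\chi))}y$ for all $x,y\in\subobj{}{A}{B}$. Using the pullback identification from (c), $x\equiv_{\ell_w^{-1}(\Pi(\chi))}y$ holds iff $\hat\ell_w(x)\equiv_{\Pi(\chi)}\hat\ell_w(y)$, and since $\Pi(\chi)$ groups subobjects of $C$ exactly by their $\chi$-color this is $\chi(w\cdot f/\Boxed{\sim_A})=\chi(w\cdot g/\Boxed{\sim_A})$ for representatives $f,g$ of $x,y$. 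Feeding this per-pair equivalence into the refinement condition gives the stated biconditional. The main obstacle I anticipate is not a deep step but the bookkeeping of types: $\ell_w^{-1}(F)$ is literally a subset of $\hom(A,B)$, and I must consistently recognize it, via (a)/(b), as either empty or a single $\sim_A$-class so that $\ell_w^{-1}(\beta)$ and $\ell_w^{-1}(\Pi)$ genuinely inhabit $\subobj{}{A}{B}$ and $\Part\subobj{}{A}{B}$. Monicity of $w$ is the one hypothesis doing real work, collapsing preimages to single classes and rendering $\hat\ell_w$ injective, so I would flag each place it is used.
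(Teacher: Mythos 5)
Your proposal is correct and follows essentially the same route as the paper: monicity of $w$ is used exactly where the paper uses it (to collapse $\ell_w^{-1}$ of a $\sim_A$-class to a single class in (a) and (b)), and (c) and (d) are then the same unwinding of preimage partitions and the refinement order. Your packaging via the induced injection $\hat\ell_w$ on subobjects, and the derivation of (b) from (a), make the disjointness argument in (c) a one-liner where the paper re-runs the representative/automorphism computation, but the mathematical content is identical.
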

\begin{proof}
  $(a)$ Let us only prove inclusion $(\subseteq)$. Take any $x \in \ell_w^{-1}(w \cdot g / \Boxed{\sim_A})$.
  Then $w \cdot x \in w \cdot g / \Boxed{\sim_A}$, so there is an $\alpha \in \Aut(A)$ such that
  $w \cdot x = w \cdot g \cdot \alpha$. Since $w$ is mono if follows that $x = g \cdot \alpha$ whence
  $x \in g / \Boxed{\sim_A}$.
  
  $(b)$ Take an $f \in \hom(A, C)$ and assume that $\ell_w^{-1}(f / \Boxed{\sim_A}) \ne \0$. Take any
  $g \in \ell_w^{-1}(f / \Boxed{\sim_A})$ and let us show that $\ell_w^{-1}(f / \Boxed{\sim_A}) = g / \Boxed{\sim_A}$.
  Again, we shall only prove inclusion $(\subseteq)$. Take any $x \in \ell_w^{-1}(f / \Boxed{\sim_A})$. Then
  $w \cdot x \in f / \Boxed{\sim_A}$, so $w \cdot x \cdot \alpha = f$ for some $\alpha \in \Aut(A)$.
  By the same argument, $g \in \ell_w^{-1}(f / \Boxed{\sim_A})$ yields that
  $w \cdot g \cdot \alpha' = f$ for some $\alpha' \in \Aut(A)$. Therefore,
  $w \cdot x \cdot \alpha = f = w \cdot g \cdot \alpha'$. Since $w$ is mono,
  $x \cdot \alpha = g \cdot \alpha'$, so $x = g \cdot \alpha''$ for $\alpha'' = \alpha' \cdot \alpha^{-1} \in \Aut(A)$.
  This proves that $x \in g / \Boxed{\sim_A}$.
  
  $(c)$ Take any $\Pi \in \Part\subobj{} A C$ and let $\Pi = \{\beta_i : i \in I\}$ where $i \ne j \Rightarrow \beta_i \cap \beta_j = \0$.
  Then $\ell_w^{-1}(\Pi) = \{\ell_w^{-1}(\beta_i) : i \in I\} \setminus \{\0\}$.
  Note that $\bigcup_{i \in I} \ell_w^{-1}(\beta_i) = \subobj{} A B$ follows from $(a)$ and $(b)$.
  Finally, take any $i, j \in I$ such that $i \ne j$ and $\ell_w^{-1}(\beta_i) \ne \0 \ne \ell_w^{-1}(\beta_j)$,
  and let us show that $\ell_w^{-1}(\beta_i) \cap \ell_w^{-1}(\beta_j) = \0$.
  Suppose this is not the case. Then there exist $f_i, f_j \in \hom(A, C)$ such that $f_i / \Boxed{\sim_A} \in \beta_i$,
  $f_j / \Boxed{\sim_A} \in \beta_j$ and $\ell_w^{-1}(f_i / \Boxed{\sim_A}) = \ell_w^{-1}(f_j / \Boxed{\sim_A})$.
  By $(b)$ there exist $g_i, g_j \in \hom(A, B)$ such that $\ell_w^{-1}(f_i / \Boxed{\sim_A}) = g_i / \Boxed{\sim_A}$ and
  $\ell_w^{-1}(f_j / \Boxed{\sim_A}) = g_j / \Boxed{\sim_A}$. Since $g_i / \Boxed{\sim_A} = g_j / \Boxed{\sim_A}$
  there is an $\alpha \in \Aut(A)$ such that $g_i \cdot \alpha = g_j$. Moreover, there exist $\alpha_i, \alpha_j \in \Aut(A)$ such
  that $w \cdot g_i \cdot \alpha_i = f_i$ and $w \cdot g_j \cdot \alpha_j = f_j$, so
  $w \cdot g_i \cdot \alpha \cdot \alpha_j = f_j$. Therefore, $w \cdot g_i \in f_i / \Boxed{\sim_A}$ and
  $w \cdot g_i \in f_j / \Boxed{\sim_A}$. Hence, $f_i / \Boxed{\sim_A} = f_j / \Boxed{\sim_A}$, so $\beta_i \cap \beta_j \ne \0$.
  Contradiction.
  
  $(d)$ Let $\Sigma = \ell_w^{-1}(\Pi(\chi))$ and note that
  $f / \Boxed{\sim_A} \equiv_\Sigma g / \Boxed{\sim_A}$ if and only if $\chi(w \cdot f / \Boxed{\sim_A}) = \chi(w \cdot g / \Boxed{\sim_A})$.
\end{proof}

Let $\CC$ be a locally small category whose morphisms are mono. Fix $A, B \in \Ob(\CC)$ so that $A \to B$.
A partition $\Lambda \in \Part\subobj{} A B$ is \emph{essential} if
there is a $C \in \Ob(\CC)$ such that $B \to C$ and
for every partition $\Pi \in \Part \subobj {} A C$ there is a $w \in \hom(B, C)$
such that $\Lambda \succeq \ell_w^{-1}(\Pi)$.
Let $\Ess\subobj{} AB$ be the set of all the essential partitions of $\subobj {} A B$.
Where necessary we shall write $\Ess\subobj\CC AB$ to indicate in which category the essential partitions
are considered.

\begin{LEM}\label{rament.lem.Ess-nonempty}
  Let $\CC$ be a category whose morphisms are mono and homsets are finite. Let $A, B, C \in \Ob(\CC)$
  be chosen so that $A \to B \to C$ and let $\Part\subobj{}AC = \{\Pi_1, \Pi_2, \ldots, \Pi_n\}$.
  Further, let $w_1, w_2, \ldots, w_n \in \hom(B, C)$ be arbitrary.
  Then $\bigvee_{i = 1}^n \ell_{w_i}^{-1}(\Pi_i) \in \Ess\subobj{}AB$.
  In particular, $\Ess\subobj{}AB \ne \0$ for all $A, B \in \Ob(\CC)$ such that $A \to B$.
\end{LEM}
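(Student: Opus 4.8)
The plan is to show that the object $C$ itself witnesses the essentiality of $\Lambda := \bigvee_{i=1}^n \ell_{w_i}^{-1}(\Pi_i)$. First I would confirm that $\Lambda$ is genuinely a partition of $\subobj{}AB$: by Lemma~\ref{rament.lem.basic-prop}$(c)$ each factor $\ell_{w_i}^{-1}(\Pi_i)$ already lies in $\Part\subobj{}AB$, and since $\Part\subobj{}AB$ is a lattice closed under suprema, the join $\Lambda$ again belongs to $\Part\subobj{}AB$. Here the assumption that homsets are finite is what guarantees $\subobj{}AB$ and $\subobj{}AC$ to be finite sets, so that $\Part\subobj{}AC$ really is a finite list $\{\Pi_1,\dots,\Pi_n\}$ and the indexing in the statement makes sense.

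Next I would check the defining condition of essentiality with $C$ as the witnessing object, noting that $B \to C$ holds by hypothesis. Let $\Pi \in \Part\subobj{}AC$ be arbitrary. Since $\Part\subobj{}AC = \{\Pi_1,\dots,\Pi_n\}$ exhausts \emph{all} partitions of $\subobj{}AC$, we must have $\Pi = \Pi_j$ for some $j \in \{1,\dots,n\}$. I would then simply take $w := w_j \in \hom(B,C)$. The term $\ell_{w_j}^{-1}(\Pi_j)$ is one of the factors of the join defining $\Lambda$, so, $\Lambda$ being an upper bound of that family in the refinement order, we get $\ell_{w_j}^{-1}(\Pi_j) \preceq \Lambda$, i.e.\ $\Lambda \succeq \ell_{w}^{-1}(\Pi)$. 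This is precisely what essentiality demands, so $\Lambda \in \Ess\subobj{}AB$.

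For the ``in particular'' clause I would instantiate the first part with $C := B$, which is legitimate because $\id_B \in \hom(B,B)$ yields $B \to B$ whenever $A \to B$; choosing any morphisms $w_i$ (for instance all equal to $\id_B$) then produces a concrete essential partition, whence $\Ess\subobj{}AB \ne \0$. Observe that $A \to B$ ensures $\subobj{}AB \ne \0$, so $\Part\subobj{}AB$ is a nonempty finite set of partitions and the construction is meaningful.

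I do not expect a genuine obstacle: the entire content of the lemma is the bookkeeping idea of pre-assigning, in advance, one morphism $w_i$ to each of the finitely many partitions $\Pi_i$ of $\subobj{}AC$, and then letting the common refinement $\Lambda$ dominate every pullback $\ell_{w_i}^{-1}(\Pi_i)$ simultaneously. The only two points that require care are the direction of the refinement order (the join is the \emph{finer} partition, hence lies \emph{above} each factor, so $\succeq$ is what we obtain) and the combined appeal to Lemma~\ref{rament.lem.basic-prop}$(c)$ and finiteness of homsets to keep the whole construction inside $\Part\subobj{}AB$.
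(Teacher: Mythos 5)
Your proof is correct and follows essentially the same route as the paper: the witness object is $C$, any $\Pi \in \Part\subobj{}AC$ equals some $\Pi_{i_0}$ in the given enumeration, and the supremum $\bigvee_{i=1}^n \ell_{w_i}^{-1}(\Pi_i)$ trivially refines (lies $\succeq$-above) the factor $\ell_{w_{i_0}}^{-1}(\Pi_{i_0})$. The paper leaves the well-definedness of the join and the ``in particular'' clause implicit, whereas you spell them out (via Lemma~\ref{rament.lem.basic-prop}~$(c)$ and the choice $C := B$), but the argument is the same.
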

\begin{proof}
  To show that $\bigvee_{i = 1}^n \ell_{w_i}^{-1}(\Pi_i)$ is essential
  take any $\Pi_{i_0} \in \Part\subobj{}AC$. Then trivially
  $\bigvee_{i = 1}^n \ell_{w_i}^{-1}(\Pi_i) \succeq \ell_{w_{i_0}}^{-1}(\Pi_{i_0})$.
\end{proof}

\begin{DEF}
  Let $\CC$ be a small category whose morphisms are mono and homsets are finite, and let $H$ be an entropy on partitions.
  Define $\tilde r : \Ob(\CC) \to \RR \cup \{\infty\}$ as follows:
  $$
    \tilde r(X) = \inf\nolimits_{A \in \upset{}X} \sup\nolimits_{B \in \upset{}A} \min\nolimits_{\Lambda \in \Ess\subobj{}AB} H(\Lambda).
  $$
  We refer to $\tilde r$ as the \emph{Ramsey entropy based on $H$}, and say that \emph{$\CC$ admits the Ramsey entropy
  based on $H$} if $\tilde r(X) < \infty$ for all $X \in \Ob(\CC)$. In particular, if $\tilde r$ is based on the Boltzmann entropy
  $H^\Bol$, we refer to $\tilde r$ as the \emph{Ramsey-Boltzmann entropy}. Where necessary, we shall write $\tilde r_\CC$.
\end{DEF}

The following is straightforward, so we omit the proof.

\begin{LEM}\label{rament.lem.inv-iso}
  Let $\CC$ be a small category whose morphisms are mono and homsets are finite.
  If $X_1 \cong X_2 $ then $\tilde r(X_1) = \tilde r(X_2)$, for all $X_1, X_2 \in \Ob(\CC)$.
  \qed
\end{LEM}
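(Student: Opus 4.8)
The plan is to notice that $\tilde r(X)$ depends on the object $X$ only through the up-set $\upset{}X$ over which the outer infimum ranges. Writing
$$
  F(A) = \sup\nolimits_{B \in \upset{}A} \min\nolimits_{\Lambda \in \Ess\subobj{}AB} H(\Lambda)
$$
for the inner expression, we have $\tilde r(X) = \inf_{A \in \upset{}X} F(A)$, and $F$ makes no reference to $X$ at all. Thus it suffices to prove that $X_1 \cong X_2$ forces $\upset{}{X_1} = \upset{}{X_2}$: once the two index sets coincide, the two infima are formed over the same family of values, so $\tilde r(X_1) = \tilde r(X_2)$.

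To establish the equality of up-sets, I would fix an isomorphism $\phi \in \hom(X_1, X_2)$ with inverse $\phi^{-1} \in \hom(X_2, X_1)$. If $A \in \upset{}{X_1}$, choose $f \in \hom(X_1, A)$; then $f \cdot \phi^{-1} \in \hom(X_2, A)$ is a morphism, witnessing $A \in \upset{}{X_2}$. The reverse inclusion follows symmetrically by composing with $\phi$ in place of $\phi^{-1}$, so $\upset{}{X_1} = \upset{}{X_2}$, as required.

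There is no genuine obstacle here; the argument is a pure unwinding of the definition, which is why the statement is flagged as straightforward. The single point worth making explicit is the separation of variables in the definition of $\tilde r$: the inner $\sup$, the $\min$ over $\Ess\subobj{}AB$, and all the partition data are determined by $A$ and the ambient category alone, and are therefore untouched when $X_1$ is replaced by an isomorphic $X_2$. In fact the monomorphism hypothesis plays no role in this particular invariance — all that is used is that an isomorphism composes on the right to carry $\hom(X_1, A)$ to $\hom(X_2, A)$ and back, hence preserving the reachability relation $\toCC\CC$.
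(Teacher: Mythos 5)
Your argument is correct and is exactly the straightforward unwinding the paper has in mind when it omits the proof: $\phi(A)$ depends only on $A$, and an isomorphism $X_1 \cong X_2$ gives $\upset{}{X_1} = \upset{}{X_2}$ by pre-composition, so the two infima range over identical sets of values. Nothing is missing.
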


The following lemma shows that essential partitions are closely related to structural Ramsey degrees
providing, thus, the link between essential partitions, structural Ramsey degrees and the Ramsey entropy.

\begin{LEM}\label{rament.lem.ess-col}
  Let $\CC$ be a category whose morphisms are mono and homsets are finite.
  
  $(a)$ For a $t \in \NN$ and an $A \in \Ob(\CC)$,
  $\tilde t(A) \le t$ if and only if
  for all $B \in \Ob(\CC)$ such that $A \to B$
  there is a partition $\Lambda \in \Ess\subobj{}AB$ such that $|\Lambda| \le t$.
  
  $(b)$ For all $A \in \Ob(\CC)$, $\tilde t(A) = \sup_{B : A \to B} \min_{\Lambda \in \Ess\subobj{}AB} |\Lambda|$.
\end{LEM}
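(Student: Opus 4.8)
The plan is to prove part $(a)$ and then read off $(b)$ as a reformulation. The technical bridge for both implications is Lemma~\ref{rament.lem.basic-prop}$(d)$: for any coloring $\chi : \subobj{}AC \to k$ and any $w \in \hom(B, C)$, the blocks of $\ell_w^{-1}(\Pi(\chi))$ are exactly the classes on which $\chi(w \cdot -)$ is constant, so that $|\ell_w^{-1}(\Pi(\chi))| = |\chi(w \cdot \subobj{}AB)|$. In words, ``the pulled-back coloring uses few colors'' and ``the pulled-back partition has few blocks'' are literally the same assertion, and this is what makes essential partitions speak to the arrow $C \overset\sim\longrightarrow (B)^{A}_{k,t}$.

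For the direction that a small essential partition forces a small degree, I would suppose that for every $B$ with $A \to B$ there is a $\Lambda \in \Ess\subobj{}AB$ with $|\Lambda| \le t$, and let $C$ (with $B \to C$) witness its being essential. Fixing an arbitrary $k$ and a coloring $\chi : \subobj{}AC \to k$, I apply the defining property of $\Lambda$ to the partition $\Pi(\chi)$ to obtain a $w \in \hom(B, C)$ with $\Lambda \succeq \ell_w^{-1}(\Pi(\chi))$; since $\ell_w^{-1}(\Pi(\chi))$ is then coarser than $\Lambda$, the bridge gives $|\chi(w \cdot \subobj{}AB)| = |\ell_w^{-1}(\Pi(\chi))| \le |\Lambda| \le t$. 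As $k$ and $\chi$ were arbitrary, $C$ witnesses $C \overset\sim\longrightarrow (B)^{A}_{k,t}$ for every $k$, whence $\tilde t(A) \le t$.

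The converse is the heart of the matter. Assuming $\tilde t(A) \le t$, I fix $B$ with $A \to B$ and aim to produce an essential partition of $\subobj{}AB$ with at most $t$ blocks. The raw material is Lemma~\ref{rament.lem.Ess-nonempty}: for any $C$ with $B \to C$ and any assignment of witnesses $\Pi \mapsto w_\Pi$, the join $\bigvee_\Pi \ell_{w_\Pi}^{-1}(\Pi)$ is essential. The degree hypothesis supplies, for each partition $\Pi$ of $\subobj{}AC$, a witness $w_\Pi$ for which the individual count $|\ell_{w_\Pi}^{-1}(\Pi)|$ does not exceed $t$. The difficulty -- and this is the step I expect to be the real obstacle -- is that a join of several $t$-block partitions can have far more than $t$ blocks, so a single pass through the degree hypothesis is not enough. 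The plan is to force the reduced partitions to refine one common $t$-block partition by iterating: one builds a chain $B \to C_1 \to C_2 \to \cdots$, at each stage applying $\tilde t(A) \le t$ to push the family of achievable reduced partitions downward, and then uses the finiteness of $\Part\subobj{}AB$ to conclude that the coarsest reduced partition obtainable stabilizes after finitely many steps at a value $\le t$; that stable partition is the desired essential partition. Note that no amalgamation is available in this lemma, so the only levers are the degree hypothesis and the finiteness of the homsets.

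Finally, $(b)$ falls out of $(a)$. Write $m(B) = \min_{\Lambda \in \Ess\subobj{}AB} |\Lambda|$, which exists because $\Ess\subobj{}AB \ne \0$ by Lemma~\ref{rament.lem.Ess-nonempty} and $\Part\subobj{}AB$ is finite. Part $(a)$ says precisely that, for every $t \in \NN$, $\tilde t(A) \le t$ holds if and only if $m(B) \le t$ for all $B$, i.e.\ if and only if $\sup_{B : A \to B} m(B) \le t$. Letting $t$ range over $\NN$ and invoking the Convention to handle the value $\infty$ then yields $\tilde t(A) = \sup_{B : A \to B} m(B)$, which is the asserted identity.
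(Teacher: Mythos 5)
Your backward implication in $(a)$ and your derivation of $(b)$ from $(a)$ are correct and coincide with the paper's argument (the bridge $|\ell_w^{-1}(\Pi(\chi))| = |\chi(w \cdot \subobj{}AB)|$ is exactly how the paper argues). The forward implication, however, is left as a plan rather than a proof, and the plan as stated does not work. You propose to iterate the degree hypothesis along a chain $B \to C_1 \to C_2 \to \cdots$ and wait for ``the coarsest reduced partition obtainable'' to stabilize. But $\tilde t(A) \le t$ only produces, for each object $B'$, \emph{some} $C$ with $C \overset\sim\longrightarrow (B')^{A}_{k,t}$; it gives no way to place $C_{j+1}$ above $C_j$ while retaining the arrow relative to the original $B$ (and, as you note yourself, neither directedness nor amalgamation is available here to build such a chain). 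Even granting the chain, it is unclear which quantity is monotone along it, why a ``stable'' value would be at most $t$, and why the limiting partition would be essential. So the heart of the forward direction is missing.

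The idea you are missing is that no iteration and no join over \emph{different} witnesses is needed, because $\Pi \mapsto \ell_w^{-1}(\Pi)$ is monotone with respect to $\succeq$: a single $w$ obtained for a sufficiently fine partition of $\subobj{}AC$ automatically serves as the witness for every coarser partition. The paper runs this as a one-shot contradiction. Suppose no $\Lambda \in \Part\subobj{}AB$ with at most $t$ blocks is essential, and list all such partitions as $\Lambda_0, \ldots, \Lambda_{n-1}$ (finitely many, since homsets are finite). Fix the $C$ supplied by $\tilde t(A) \le t$ for this $B$; non-essentiality gives, for each $i$, a partition $\Sigma_i \in \Part\subobj{}AC$ with $\Lambda_i \not\succeq \ell_w^{-1}(\Sigma_i)$ for every $w \in \hom(B,C)$. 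Applying the degree hypothesis \emph{once}, to the product coloring $\chi^*(x) = (\chi_{\Sigma_0}(x), \ldots, \chi_{\Sigma_{n-1}}(x))$, yields a $w$ for which $\Gamma = \ell_w^{-1}(\Pi(\chi^*))$ has at most $t$ blocks, hence $\Gamma = \Lambda_i$ for some $i$; but $\Pi(\chi^*) \succeq \Sigma_i$ forces $\Gamma \succeq \ell_w^{-1}(\Sigma_i)$, a contradiction. (A direct variant: apply the hypothesis to the coloring assigning each element of $\subobj{}AC$ its own color; the pullback along the resulting $w$ of the finest partition of $\subobj{}AC$ has at most $t$ blocks and, by monotonicity, coarsens $\ell_w^{-1}(\Sigma)$ for every $\Sigma \in \Part\subobj{}AC$ with that same $w$, so it is itself the desired essential partition.) In short, your decomposition ``one witness $w_\Pi$ per partition $\Pi$, then join'' is the wrong one; the correct decomposition is ``one sufficiently fine partition, one witness''.
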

\begin{proof}
  $(a)$ Fix an $A \in \Ob(\CC)$ and a $t \in \NN$.
  
  $(\Leftarrow)$: Take any $B \in \Ob(\CC)$ such that $A \to B$.
  By the assumption there is a partition $\Lambda \in \Ess\subobj{}AB$ such that $|\Lambda| \le t$.
  Therefore, there is a $C \in \Ob(\CC)$ such that $B \to C$ and for every partition $\Pi \in \Part\subobj{}AC$
  there is a $w \in \hom(B, C)$ such that $\Lambda \succeq \ell_w^{-1}(\Pi)$. Take any 
  $k \in \NN$ and any coloring $\chi : \subobj{}AC \to k$. By the choice of $\Lambda$ we have that
  $\Lambda \succeq \ell_w^{-1}(\Pi(\chi))$. Then
  $$\textstyle
    |\chi(w \cdot \subobj{}AB)| = |\ell_w^{-1}(\Pi(\chi))| \le |\Lambda| \le t.
  $$
  (To justify the equality let $\beta_i = \chi^{-1}(i)$; then $\ell_w^{-1}(\beta_i) \ne \0$ iff $i \in \chi(w \cdot \subobj{}AB)$.)

  $(\Rightarrow)$: Aiming for a contradiction,
  suppose that $\tilde t(A) \le t$, but that there is a $B \in \Ob(\CC)$ with $A \to B$ such that no
  partition $\Lambda \in \Part\subobj{}AB$ with at most $t$ blocks is essential.
  Let $\{\Lambda_0, \ldots, \Lambda_{n-1}\}$ be the set of all the partitions of $\subobj{}AB$ with at most $t$ blocks.
  
  From $\tilde t(A) \le t$ it then follows that for this particular $B$
  there is a $C$ such that for every $k \in \NN$ and every $\chi : \subobj{}AC \to k$ there is a $w \in \hom(B, C)$
  satisfying
  $
    |\chi(w \cdot \subobj{}AB)| \le t
  $.

  On the other hand, since no $\Lambda_i$ is essential, $i < n$, for every such $\Lambda_i$
  there exist a $\Sigma_i \in \Part\subobj{}AC$ such that
  \begin{equation*}
    (\forall w \in \hom(B, C)) \; \Lambda_i \not\succeq \ell_w^{-1}(\Sigma_i).
  \end{equation*}
  For the sake of convenience, let $\chi_i = \chi_{\Sigma_i} : \Part\subobj{}AC \to k_i$ be a coloring induced by $\Sigma_i$
  and let $\equiv_i$ be an abbreviation of $\equiv_{\Lambda_i}$. Then the last fact is equivalent to
  \begin{equation}\label{rament.eq.IFF}
    \begin{array}{r@{}l}
    (\forall \; &w \in \hom(B, C))(\exists f / \Boxed{\sim_A}, g / \Boxed{\sim_A} \in \subobj{}AB)\\
                &(f / \Boxed{\sim_A} \equiv_i g / \Boxed{\sim_A} \land \chi_i(w \cdot f / \Boxed{\sim_A}) \ne \chi_i(w \cdot g / \Boxed{\sim_A})).
    \end{array}
  \end{equation}
  Consider the coloring
  $$\textstyle
    \chi^* : \subobj{}AC \to k_0 \times k_1 \times \ldots \times k_{n-1}
  $$
  given by
  $$
    \chi^*(h / \Boxed{\sim_A}) = (\chi_{0}(h / \Boxed{\sim_A}), \chi_{1}(h / \Boxed{\sim_A}), \ldots, \chi_{n-1}(h / \Boxed{\sim_A})).
  $$
  Since $\tilde t(A) \le t$ there is a $w \in \hom(B, C)$ such that $|\chi^*(w \cdot \subobj{}AB)| \le t$.
  Let $s = |\chi^*(w \cdot \subobj{}AB)|$ and put
  $$
    \Gamma = \ell_w^{-1}(\Pi(\chi^*)).
  $$
  Clearly, $\Gamma$ is a partition of $\Part\subobj{}AB$ with $s \le t$ blocks,
  so $\Gamma = \Lambda_i$ for some $i < n$.
  By \eqref{rament.eq.IFF} there exist $f / \Boxed{\sim_A}, g / \Boxed{\sim_A} \in \subobj{}AB$ such that
  $$
    f / \Boxed{\sim_A} \equiv_{i} g / \Boxed{\sim_A} \text{ and } \chi_{i}(w \cdot f / \Boxed{\sim_A}) \ne \chi_{i}(w \cdot g / \Boxed{\sim_A}).
  $$
  Note that $f / \Boxed{\sim_A} \equiv_{i} g / \Boxed{\sim_A}$ actually means $f / \Boxed{\sim_A} \equiv_{\Gamma} g / \Boxed{\sim_A}$ because $\Gamma = \Lambda_i$,
  whence $\chi^*(w \cdot f / \Boxed{\sim_A}) = \chi^*(w \cdot g / \Boxed{\sim_A})$ (because $\Gamma = \ell_w^{-1}(\Pi(\chi^*))$).
  But projecting the latter onto the $i$th coordinate gives
  $\chi_{i}(w \cdot f / \Boxed{\sim_A}) = \chi_{i}(w \cdot g / \Boxed{\sim_A})$. Contradiction.

  $(b)$
  Directly from $(a)$.
\end{proof}

For a small category $\CC$ it will be convenient for future calculations
to introduce the following function $\Ob(\CC) \to \RR \union \{\infty\}$:
$$
  \phi(A) = \sup\nolimits_{B \in \upset{}A} \min\nolimits_{\Lambda \in \Ess\subobj{}AB} H(\Lambda)
$$
so that $\tilde r(X) = \inf_{A \in \upset{}X} \phi(A)$. Where necessary, we shall write $\phi_\CC$.

\begin{LEM}\label{rament.lem.phi<=log.srd}
  Let $\CC$ be a small category whose morphisms are mono and homsets are finite.
  Then $\phi(A) \le \log \tilde t(A)$ for all $A \in \Ob(\CC)$, where we assume $\log \infty = \infty$.
\end{LEM}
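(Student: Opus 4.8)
The plan is to prove the inequality by bounding, termwise, the nested $\sup$--$\min$ that defines $\phi(A)$. First I would dispose of the degenerate case: if $\tilde t(A) = \infty$ then, with the convention $\log \infty = \infty$, the right-hand side is $\infty$ and there is nothing to prove. So I may assume $t := \tilde t(A) \in \NN$ and work with this fixed value throughout.

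The key input is Lemma~\ref{rament.lem.ess-col}$(a)$, applied with this particular $t$. Since $\tilde t(A) \le t$, the $(\Rightarrow)$ direction of that lemma guarantees that for every $B \in \upset{} A$ there is an essential partition $\Lambda_B \in \Ess\subobj{}AB$ with $|\Lambda_B| \le t$. For this particular partition the first defining axiom of an entropy on partitions, namely $H_X(\Pi) \le \log |\Pi|$, yields $H(\Lambda_B) \le \log |\Lambda_B| \le \log t$.

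It remains to assemble the two quantifiers. Because the minimum over all essential partitions is at most the value attained at the specific witness $\Lambda_B$, we obtain $\min_{\Lambda \in \Ess\subobj{}AB} H(\Lambda) \le H(\Lambda_B) \le \log t$ for every $B \in \upset{} A$. Taking the supremum over all such $B$ preserves this uniform upper bound, so
$$
  \phi(A) = \sup\nolimits_{B \in \upset{} A} \min\nolimits_{\Lambda \in \Ess\subobj{}AB} H(\Lambda) \le \log t = \log \tilde t(A),
$$
which is the claim.

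There is no genuine obstacle here: the entire content is the interplay of the combinatorial characterization of the structural Ramsey degree in Lemma~\ref{rament.lem.ess-col}$(a)$---which supplies an essential partition with at most $\tilde t(A)$ blocks---with the single entropy axiom $H \le \log |{\cdot}|$ controlling $H$ by the number of blocks. The only points that require a moment's care are the degenerate case (handled by $\log \infty = \infty$) and the direction of the inequality at the $\min$ step: since we need an \emph{upper} bound on the minimum, exhibiting one sufficiently small essential partition is enough, and we do not need to understand the other essential partitions at all.
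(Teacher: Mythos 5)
Your proof is correct and follows essentially the same route as the paper's: reduce to the finite case, invoke Lemma~\ref{rament.lem.ess-col}$(a)$ to produce, for each $B \in \upset{}A$, an essential partition with at most $\tilde t(A)$ blocks, and then apply the axiom $H(\Pi) \le \log|\Pi|$ before passing to the $\min$ and the $\sup$. No issues.
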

\begin{proof}
  Take any $A \in \Ob(\CC)$. If $\tilde t(A) = \infty$ the statement is trivial. Assume, therefore, that
  $\tilde t(A) = t \in \NN$. By Lemma~\ref{rament.lem.ess-col}~$(a)$, for every $B \in \Ob(\CC)$ such
  that $A \to B$ there is a partition $\Lambda \in \Ess\subobj{}AB$ with $|\Lambda| \le t$.
  Now, fix a $B_0 \in \Ob(\CC)$ and let $\Lambda_0 \in \Ess\subobj{}A{B_0}$ be a partition with $|\Lambda_0| \le t$. Then
  $$
    \min\nolimits_{\Lambda \in \Ess\subobj{}AB} \; H(\Lambda) \le H(\Lambda_0) \le \log t,
  $$
  whence we immediately get that
  $$
    \phi(A) = \sup\nolimits_{B \in \upset{}A} \min\nolimits_{\Lambda \in \Ess\subobj{}AB} H(\Lambda) \le \log t.
  $$
  This concludes the proof.
\end{proof}

\begin{THM}\label{rament.thm.R<=log-t}
  Let $\CC$ be a small category whose morphisms are mono and homsets are finite.
  Let $H$ be an arbitrary entropy on partitions and let $\tilde r$ be the Ramsey entropy based on~$H$.

  $(a)$ If $X \to Y$ for some $X, Y \in \Ob(\CC)$ then $\tilde r(X) \le \tilde r(Y)$;

  $(b)$ $\tilde r(X) \le \log \tilde t(X)$ for all $X \in \Ob(\CC)$ (where we assume $\log \infty = \infty$).

  $(c)$ If $\CC$ has finite structural Ramsey degrees then $\CC$ admits the Ramsey entropy.

  $(d)$ If $X \in \Ob(\CC)$ is a subramsey object in $\CC$ then $\tilde r(X) = 0$; in particular if $X$ is a Ramsey object in $\CC$
  then $\tilde r(X) = 0$.
\end{THM}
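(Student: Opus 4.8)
The plan is to derive all four parts from the order-theoretic shape of the definition $\tilde r(X) = \inf_{A \in \upset{}X} \phi(A)$ together with the already-established bound $\phi(A) \le \log \tilde t(A)$ of Lemma~\ref{rament.lem.phi<=log.srd}; no genuinely new Ramsey-theoretic work is needed. Before anything else I would record that $H$ is non-negative. Since the one-block partition $\{Z\}$ is the coarsest partition of any set $Z$, we have $\{Z\} \preceq \Pi$ for every $\Pi \in \Part(Z)$, so the monotonicity axiom for $H$ gives $0 = H_Z(\{Z\}) \le H_Z(\Pi)$. Because each $\Ess\subobj{}AB$ is nonempty (Lemma~\ref{rament.lem.Ess-nonempty}), it follows that $\min_{\Lambda \in \Ess\subobj{}AB} H(\Lambda) \ge 0$, hence $\phi(A) \ge 0$ for every $A$, and therefore $\tilde r(X) \ge 0$ for every $X$. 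This lower bound is exactly what part $(d)$ will need to turn inequalities into equalities.

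For part $(a)$ the key point is that $\toCC{}$ is transitive, so $X \to Y$ forces $\upset{}Y \subseteq \upset{}X$: any $A$ with $Y \to A$ also satisfies $X \to A$. Taking the infimum of the same function $\phi$ over the larger index set can only lower the value, which gives $\tilde r(X) = \inf_{A \in \upset{}X}\phi(A) \le \inf_{A \in \upset{}Y}\phi(A) = \tilde r(Y)$. For part $(b)$ I would use that $\id_X$ witnesses $X \in \upset{}X$, so $X$ itself is among the indices in the infimum defining $\tilde r(X)$; hence $\tilde r(X) \le \phi(X)$, and Lemma~\ref{rament.lem.phi<=log.srd} yields $\phi(X) \le \log \tilde t(X)$, so that $\tilde r(X) \le \log \tilde t(X)$.

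Parts $(c)$ and $(d)$ then follow as immediate corollaries. If $\CC$ has finite structural Ramsey degrees then $\tilde t(X) < \infty$ for every $X$, so $(b)$ gives $\tilde r(X) \le \log \tilde t(X) < \infty$, which is precisely the assertion that $\CC$ admits the Ramsey entropy; this is $(c)$. For $(d)$, if $A$ is a Ramsey object then $\tilde t(A) = 1$, so $(b)$ gives $\tilde r(A) \le \log 1 = 0$, and the non-negativity noted above upgrades this to $\tilde r(A) = 0$. If $X$ is subramsey I would pick a Ramsey object $A$ with $X \to A$ and apply $(a)$ to obtain $\tilde r(X) \le \tilde r(A) = 0$; non-negativity again forces $\tilde r(X) = 0$. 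The concluding ``in particular'' clause is just the special case $A = X$.

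There is essentially no analytic obstacle to overcome here: the theorem is a chaining of the monotonicity of infima over inclusions of the sets $\upset{}X$ with Lemma~\ref{rament.lem.phi<=log.srd} and the defining property of Ramsey objects. The only subtlety I would take care to make explicit, rather than leave tacit, is the non-negativity of $H$, since it is the resulting inequality $\tilde r \ge 0$ --- and no further input --- that promotes the bounds $\tilde r(A) \le 0$ and $\tilde r(X) \le 0$ in part $(d)$ to the claimed equalities.
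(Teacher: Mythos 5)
Your proof is correct and follows essentially the same route as the paper: all four parts are read off from the definition $\tilde r(X)=\inf_{A\in\upset{}X}\phi(A)$ combined with Lemma~\ref{rament.lem.phi<=log.srd}, with $(a)$ coming from $\upset{}Y\subseteq\upset{}X$ and $(d)$ from picking a Ramsey object above $X$. You are also right to make explicit the non-negativity of $H$ (hence of $\tilde r$), a point the paper leaves tacit but which is genuinely needed to upgrade the bound $\tilde r(X)\le 0$ to the claimed equality $\tilde r(X)=0$ in part $(d)$.
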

\begin{proof}
  $(a)$ Immediate from the definition of $\tilde r$.
  
  $(b)$ This follows immediately from Lemma~\ref{rament.lem.phi<=log.srd}:
  $\tilde r(X) \le \phi(X) \le \log \tilde t(X)$.

  $(c)$ This is an immediate consequence of $(b)$.

  $(d)$ Let $X \in \Ob(\CC)$ be a subramsey object. Then some $A \in \upset{}X$ is a Ramsey object,
  so $\tilde r(X) \le \phi(A) \le \log \tilde t(A) = 0$.
\end{proof}

We conclude this section by showing that Ramsey entropy based on arbitrary entropy on partitions is subadditive.
We start by considering products of essential partitions.

\begin{LEM}\label{rament.lem.L*L-in-Ess}
  Let $\CC_1$ and $\CC_2$ be categories whose morphisms are mono and homsets are finite.
  Let $A_1, B_1 \in \Ob(\CC_1)$ and $A_2, B_2 \in \Ob(\CC_2)$ be chosen so that $A_1 \to B_1$ and $A_2 \to B_2$.
  Then
  $$\textstyle
    \Ess\subobj{}{A_1}{B_1} \otimes \Ess\subobj{}{A_2}{B_2} \subseteq \Ess\subobj{}{(A_1, A_2)}{(B_1, B_2)},
  $$
  where $\Ess\subobj{}{A_1}{B_1} \otimes \Ess\subobj{}{A_2}{B_2} = \{\Lambda_1 \otimes \Lambda_2 : \Lambda_1 \in \Ess\subobj{}{A_1}{B_1}
  \text{ and } \Lambda_2 \in \Ess\subobj{}{A_2}{B_2}\}$. (Note that $(A_1, A_2), (B_1, B_2) \in \Ob(\CC_1 \times \CC_2)$, and thus
  $\Ess\subobj{}{(A_1, A_2)}{(B_1, B_2)}$ is computed in $\CC_1 \times \CC_2$.)
\end{LEM}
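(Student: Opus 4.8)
The plan is to take essential witnesses $C_1$ for $\Lambda_1$ and $C_2$ for $\Lambda_2$ and to show that $(C_1,C_2)$ witnesses that $\Lambda_1\otimes\Lambda_2$ is essential in $\CC_1\times\CC_2$. First I would record the purely formal fact that everything factors through the product: $\hom_{\CC_1\times\CC_2}((A_1,A_2),(C_1,C_2)) = \hom(A_1,C_1)\times\hom(A_2,C_2)$, the relation $\sim_{(A_1,A_2)}$ is the product of $\sim_{A_1}$ and $\sim_{A_2}$ (since $\Aut(A_1,A_2)=\Aut(A_1)\times\Aut(A_2)$), so $\subobj{}{(A_1,A_2)}{(C_1,C_2)}$ is canonically identified with $\subobj{}{A_1}{C_1}\times\subobj{}{A_2}{C_2}$, the action $(w_1,w_2)\cdot(f_1,f_2)=(w_1\cdot f_1,w_2\cdot f_2)$ is coordinatewise, and $(f_1,f_2)\equiv_{\Lambda_1\otimes\Lambda_2}(g_1,g_2)$ holds iff $f_1\equiv_{\Lambda_1}g_1$ and $f_2\equiv_{\Lambda_2}g_2$. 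Throughout I would work with the characterization in Lemma~\ref{rament.lem.basic-prop}~$(d)$, which replaces the order-theoretic condition $\Lambda\succeq\ell_w^{-1}(\Pi)$ by the transparent statement that $\Lambda$-equivalent subobjects of the domain are carried by $w$ into one and the same block of~$\Pi$.

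Now fix an arbitrary $\Pi\in\Part\subobj{}{(A_1,A_2)}{(C_1,C_2)}$ and, under the identification above, regard it as a coloring $\chi$ of $\subobj{}{A_1}{C_1}\times\subobj{}{A_2}{C_2}$ by its blocks (identifying each morphism with the subobject it determines). I must produce a single $w=(w_1,w_2)$ with $\Lambda_1\otimes\Lambda_2\succeq\ell_w^{-1}(\Pi)$, and the one genuine difficulty is that $\Pi$ need not be a product partition, so the two coordinates are entangled. I would break the entanglement by choosing $w_2$ and then $w_1$ in that (asymmetric) order. First I define the \emph{column} partition $\Pi_2^*$ of $\subobj{}{A_2}{C_2}$ by declaring $y\equiv y'$ iff the functions $\chi(-,y)$ and $\chi(-,y')$ on $\subobj{}{A_1}{C_1}$ coincide. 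Since $\Lambda_2$ is essential with witness $C_2$, there is a $w_2\in\hom(B_2,C_2)$ with $\Lambda_2\succeq\ell_{w_2}^{-1}(\Pi_2^*)$, which by Lemma~\ref{rament.lem.basic-prop}~$(d)$ means that $f_2\equiv_{\Lambda_2}g_2$ implies $\chi(x,w_2\cdot f_2)=\chi(x,w_2\cdot g_2)$ for \emph{every} $x\in\subobj{}{A_1}{C_1}$. The point of this design is that $\Pi_2^*$ does not mention $w_1$, so this uniform ``for all $x$'' conclusion is available before $w_1$ is even chosen.

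With $w_2$ fixed, I next define a partition $\Pi_1^*$ of $\subobj{}{A_1}{C_1}$ by declaring $x\equiv x'$ iff $\chi(x,y)=\chi(x',y)$ for all $y$ in the now-determined image $w_2\cdot\subobj{}{A_2}{B_2}$. Essentiality of $\Lambda_1$ with witness $C_1$ then gives a $w_1\in\hom(B_1,C_1)$ with $\Lambda_1\succeq\ell_{w_1}^{-1}(\Pi_1^*)$, i.e.\ $f_1\equiv_{\Lambda_1}g_1$ implies $\chi(w_1\cdot f_1,y)=\chi(w_1\cdot g_1,y)$ for all such $y$. Finally I would verify the condition of Lemma~\ref{rament.lem.basic-prop}~$(d)$ for $w=(w_1,w_2)$: given $(f_1,f_2)\equiv_{\Lambda_1\otimes\Lambda_2}(g_1,g_2)$, so that $f_1\equiv_{\Lambda_1}g_1$ and $f_2\equiv_{\Lambda_2}g_2$, the two-step comparison
$$
  \chi(w_1\cdot f_1,\,w_2\cdot f_2)=\chi(w_1\cdot g_1,\,w_2\cdot f_2)=\chi(w_1\cdot g_1,\,w_2\cdot g_2),
$$
where the first equality uses the property of $w_1$ (with $y=w_2\cdot f_2$ lying in the image of $w_2$) and the second uses the uniform property of $w_2$ (with $x=w_1\cdot g_1$), shows that the two product subobjects land in the same block of $\Pi$. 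As $\Pi$ was arbitrary, $(C_1,C_2)$ witnesses $\Lambda_1\otimes\Lambda_2\in\Ess\subobj{}{(A_1,A_2)}{(B_1,B_2)}$.

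I expect the main obstacle to be precisely the coupling just described: a naive attempt would select $w_1$ and $w_2$ independently from the two essentiality hypotheses, but the arbitrary partition $\Pi$ mixes the coordinates, so the order of selection and the exact shape of the auxiliary partitions $\Pi_1^*,\Pi_2^*$ are what make the argument go through. Demanding the ``for all columns'' strength of $\Pi_2^*$ is essential, since it makes the second-coordinate conclusion uniform in the first coordinate that is not yet controlled; letting $\Pi_1^*$ depend only on the already-fixed image of $w_2$ is what keeps the choice of $w_1$ legitimate. The coordinatewise factorizations of $(w_1,w_2)\cdot(-)$ and of $\equiv_{\Lambda_1\otimes\Lambda_2}$ recorded at the outset are exactly what let the chained equalities close up.
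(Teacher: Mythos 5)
Your proof is correct and follows the same skeleton as the paper's: identify $\subobj{}{(A_1,A_2)}{(C_1,C_2)}$ with $\subobj{}{A_1}{C_1}\times\subobj{}{A_2}{C_2}$, feed row/column-type auxiliary partitions to the two essentiality hypotheses, and close with the same two-step chain of equalities via Lemma~\ref{rament.lem.basic-prop}~$(d)$. The one point where you diverge is the order of selection: the paper chooses $w_1$ and $w_2$ \emph{independently}, using for both coordinates the full row and column partitions (encoded as colorings $\chi_1:\subobj{}{A_1}{C_1}\to k^{\subobj{}{A_2}{C_2}}$ and $\chi_2:\subobj{}{A_2}{C_2}\to k^{\subobj{}{A_1}{C_1}}$), so your closing diagnosis --- that independent selection would fail because $\Pi$ entangles the coordinates --- is not quite right: once both auxiliary partitions carry the uniform ``for all'' strength in the other coordinate, each equality in the chain holds no matter which $w$ was fixed first. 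Your sequential variant is equally valid and marginally more economical, since the partition $\Pi_1^*$ you feed to the essentiality of $\Lambda_1$ is coarser than the full row partition, but this buys nothing essential here.
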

\begin{proof}
  Take any $\Lambda_1 \in \Ess\subobj{}{A_1}{B_1}$ and $\Lambda_2 \in \Ess\subobj{}{A_2}{B_2}$.
  Since $\Lambda_i$ is essential, $i \in \{1, 2\}$, there is a $C_i \in \Ob(\CC_i)$ such that $B_i \to C_i$ and
  for every partition $\Pi_i \in \Part\subobj{}{A_i}{C_i}$ there is
  a $w_i \in \hom(B_i, C_i)$ such that $\Lambda_i \succeq \ell_{w_i}^{-1}(\Pi_i)$.
  To show that $\Lambda_1 \otimes \Lambda_2$ is essential, consider $(C_1, C_2) \in \Ob(\CC_1 \times \CC_2)$ and let
  $\Sigma \in \Part\subobj{}{(A_1, A_2)}{(B_1, B_2)}$ be arbitrary. Let $\chi = \chi_\Sigma : \subobj{}{(A_1, A_2)}{(C_1, C_2)} \to k$
  be a coloring that corresponds to $\Sigma$ so that $\Pi(\chi) = \Sigma$.
  It is easy to show that
  $$\textstyle
    \subobj{}{(A_1, A_2)}{(C_1, C_2)} = \subobj{}{A_1}{C_1} \times \subobj{}{A_2}{C_2}.
  $$
  Therefore, $\chi$ is a coloring $\chi : \subobj{}{A_1}{C_1} \times \subobj{}{A_2}{C_2} \to k$, so it gives rise to
  two colorings:
  $$\textstyle
    \chi_1 : \subobj{}{A_1}{C_1} \to k^{\subobj{}{A_2}{C_2}} \text{\quad and\quad}
    \chi_2 : \subobj{}{A_2}{C_2} \to k^{\subobj{}{A_1}{C_1}}
  $$
  given by
  \begin{align*}
    \chi_1(f / \Boxed{\sim_{A_1}}) = \psi^{(1)}_{f / \Boxed{\sim_{A_1}}} &\text{ where }
      \psi^{(1)}_{f / \Boxed{\sim_{A_1}}}({g / \Boxed{\sim_{A_2}}}) = \chi({f / \Boxed{\sim_{A_1}}}, {g / \Boxed{\sim_{A_2}}}), \text{ and}
    \\
    \chi_2(g / \Boxed{\sim_{A_2}}) = \psi^{(2)}_{g / \Boxed{\sim_{A_2}}} &\text{ where }
      \psi^{(2)}_{g / \Boxed{\sim_{A_2}}}({f / \Boxed{\sim_{A_1}}}) = \chi({f / \Boxed{\sim_{A_1}}}, {g / \Boxed{\sim_{A_2}}}).
  \end{align*}
  Since $\Lambda_i \in \Ess\subobj{}{A_i}{B_i}$, $i \in \{1, 2\}$, there is a $w_i \in \hom_{\CC_i}(B_i, C_i)$ such that
  \begin{equation}\label{rament.eq.lambda-a}
    \Lambda_i \succeq \ell_{w_i}^{-1}(\Pi(\chi_i)), \qquad i \in \{1, 2\}.
  \end{equation}
  Let us show that $\Lambda_1 \otimes \Lambda_2 \succeq \ell_{(w_1, w_2)}^{-1}(\Pi(\chi)) = \ell_{(w_1, w_2)}^{-1}(\Sigma)$.
  Assume that $(f_1 / \Boxed{\sim_{A_1}}, f_2 / \Boxed{\sim_{A_2}}) \equiv_{\Lambda_1 \otimes \Lambda_2}
  (g_1 / \Boxed{\sim_{A_1}}, g_2 / \Boxed{\sim_{A_2}})$. Then for each $i \in \{1, 2\}$ we have that
  $f_i / \Boxed{\sim_{A_i}} \equiv_{\Lambda_i} g_i / \Boxed{\sim_{A_i}}$, so Lemma~\ref{rament.lem.basic-prop}~$(d)$ applied to
  \eqref{rament.eq.lambda-a} yields, for each $i \in \{1, 2\}$,
  $$
    \chi_i(w_i \cdot f_i / \Boxed{\sim_{A_i}}) = \chi_i(w_i \cdot g_i / \Boxed{\sim_{A_i}}).
  $$
  Therefore,
  $
    \psi^{(i)}_{w_i \cdot f_i / \Boxed{\sim_{A_i}}} = \psi^{(i)}_{w_i \cdot g_i / \Boxed{\sim_{A_i}}},
  $
  whence
  $$
    \psi^{(1)}_{w_1 \cdot f_1 / \Boxed{\sim_{A_1}}}(w_2 \cdot f_2 / \Boxed{\sim_{A_2}}) =
    \psi^{(1)}_{w_1 \cdot g_1 / \Boxed{\sim_{A_1}}}(w_2 \cdot f_2 / \Boxed{\sim_{A_2}}),
  $$
  and
  $$
    \psi^{(2)}_{w_2 \cdot f_2 / \Boxed{\sim_{A_2}}}(w_1 \cdot g_1 / \Boxed{\sim_{A_1}}) =
    \psi^{(2)}_{w_2 \cdot g_2 / \Boxed{\sim_{A_2}}}(w_1 \cdot g_1 / \Boxed{\sim_{A_1}}).
  $$
  Recalling the definition of $\psi^{(1)}$ and $\psi^{(2)}$ in terms of $\chi$ we end up with
  $$
    \chi(w_1 \cdot f_1 / \Boxed{\sim_{A_1}}, w_2 \cdot f_2 / \Boxed{\sim_{A_2}}) =
    \chi(w_1 \cdot g_1 / \Boxed{\sim_{A_1}}, w_2 \cdot f_2 / \Boxed{\sim_{A_2}}),
  $$
  and
  $$
    \chi(w_1 \cdot g_1 / \Boxed{\sim_{A_1}}, w_2 \cdot f_2 / \Boxed{\sim_{A_2}}) =
    \chi(w_1 \cdot g_1 / \Boxed{\sim_{A_1}}, w_2 \cdot g_2 / \Boxed{\sim_{A_2}}),
  $$
  whence
  $$
    \chi(w_1 \cdot f_1 / \Boxed{\sim_{A_1}}, w_2 \cdot f_2 / \Boxed{\sim_{A_2}}) =
    \chi(w_1 \cdot g_1 / \Boxed{\sim_{A_1}}, w_2 \cdot g_2 / \Boxed{\sim_{A_2}}).
  $$
  This concludes the proof by Lemma~\ref{rament.lem.basic-prop}~$(d)$.
\end{proof}

\begin{THM}\label{rament.thm.srd-subadd}
  Let $\CC_1$ and $\CC_2$ be small categories whose morphisms are mono and homsets are finite.
  Let $H$ be an entropy on partitions and let
  $\tilde r_{\CC_1}$, $\tilde r_{\CC_2}$ and $\tilde r_{\CC_1 \times \CC_2}$
  be the Ramsey entropies based on $H$. Then for all $X_1 \in \Ob(\CC_1)$ and $X_2 \in \Ob(\CC_2)$:
  $$
    \tilde r_{\CC_1 \times \CC_2}(X_1, X_2) \le \tilde r_{\CC_1}(X_1) + \tilde r_{\CC_2}(X_2).
  $$
\end{THM}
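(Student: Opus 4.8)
The plan is to route everything through the auxiliary function $\phi$, reducing the theorem to the single pointwise inequality
$$
  \phi_{\CC_1 \times \CC_2}(A_1, A_2) \le \phi_{\CC_1}(A_1) + \phi_{\CC_2}(A_2)
$$
for all $A_1 \in \Ob(\CC_1)$ and $A_2 \in \Ob(\CC_2)$; once this is in place the theorem follows by taking infima over $\upset{}{X_1}$ and $\upset{}{X_2}$.

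First I would record how the index sets decompose in the product category. Since a morphism in $\CC_1 \times \CC_2$ is a pair of morphisms, $(A_1, A_2) \to (B_1, B_2)$ holds iff $A_1 \to B_1$ and $A_2 \to B_2$, so $\upset{}{(A_1, A_2)}$ is exactly the set of pairs $(B_1, B_2)$ with $B_1 \in \upset{}{A_1}$ and $B_2 \in \upset{}{A_2}$, and likewise for $\upset{}{(X_1, X_2)}$. This lets me rewrite each extremal operation over objects of $\CC_1 \times \CC_2$ as an iterated operation over such pairs. I should also note that all the index sets are nonempty ($A \in \upset{}{A}$ and $\Ess\subobj{}{A}{B} \neq \varnothing$ by Lemma~\ref{rament.lem.Ess-nonempty}) and that the minima are attained, since homsets, and hence $\Part\subobj{}{A}{B}$, are finite.

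Next, fixing $B_1 \in \upset{}{A_1}$ and $B_2 \in \upset{}{A_2}$, I would bound the inner minimum. By Lemma~\ref{rament.lem.L*L-in-Ess} every product $\Lambda_1 \otimes \Lambda_2$ with $\Lambda_i \in \Ess\subobj{}{A_i}{B_i}$ lies in $\Ess\subobj{}{(A_1, A_2)}{(B_1, B_2)}$; minimizing $H$ over this larger set is therefore at most minimizing over the subfamily of products. Using the additivity $H(\Lambda_1 \otimes \Lambda_2) = H(\Lambda_1) + H(\Lambda_2)$ (applicable because $\subobj{}{(A_1, A_2)}{(B_1, B_2)} = \subobj{}{A_1}{B_1} \times \subobj{}{A_2}{B_2}$, as noted in the proof of Lemma~\ref{rament.lem.L*L-in-Ess}) together with the fact that the minimum of a sum of two terms in independent variables splits, I obtain
$$
  \min_{\Lambda \in \Ess\subobj{}{(A_1, A_2)}{(B_1, B_2)}} H(\Lambda)
    \le \min_{\Lambda_1 \in \Ess\subobj{}{A_1}{B_1}} H(\Lambda_1)
      + \min_{\Lambda_2 \in \Ess\subobj{}{A_2}{B_2}} H(\Lambda_2).
$$

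Finally I would take the supremum over $(B_1, B_2)$ and then the infimum over $(A_1, A_2)$. Since in each case the two summands on the right depend on disjoint variables and $H$ takes values in $[0, \infty]$, both the supremum and the infimum of the sum split as the sum of the separate extrema ($\sup_{x, y}[f(x) + g(y)] = \sup_x f(x) + \sup_y g(y)$, and the analogue for $\inf$). This yields the pointwise bound for $\phi$ and then, after the infimum, the desired subadditivity. The only point requiring care is this bookkeeping of the interchange of $\sup$/$\inf$ with the sum over independent indices, which is routine in $[0, \infty]$ with the convention $a + \infty = \infty$ but must be invoked explicitly; the genuinely substantive content, that products of essential partitions are essential, has already been isolated in Lemma~\ref{rament.lem.L*L-in-Ess}, so no further Ramsey-theoretic argument is needed.
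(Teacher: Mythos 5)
Your proposal is correct and follows essentially the same route as the paper's own proof: reduce to the pointwise inequality $\phi(A_1,A_2)\le\phi(A_1)+\phi(A_2)$ via Lemma~\ref{rament.lem.L*L-in-Ess} and the additivity of $H$ on $\otimes$, then split the extremal operations over the product index sets. The only cosmetic difference is that the paper majorizes each inner minimum by $\phi(A_i)$ before taking the supremum over $(B_1,B_2)$, whereas you split the supremum of the sum directly; both are valid given the nonemptiness of the index sets that you correctly record.
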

\begin{proof}
  Let us first show that for arbitrary $B_1 \in \Ob(\CC_1)$, $B_2 \in \Ob(\CC_2)$ satisfying $A_1 \to B_1$
  and $A_2 \to B_2$:
  $$
    \min\nolimits_{\Lambda \in \Ess\subobj{}{(A_1, A_2)}{(B_1, B_2)}} H(\Lambda)
    \le \min\nolimits_{\Lambda_1 \in \Ess\subobj{}{A_1}{B_1}} H(\Lambda_1)
    + \min\nolimits_{\Lambda_2 \in \Ess\subobj{}{A_2}{B_2}} H(\Lambda_2).
  $$
  Choose
  $\Lambda_1^* \in \Ess\subobj{}{A_1}{B_1}$ and $\Lambda_2^* \in \Ess\subobj{}{A_2}{B_2}$ so that
  \begin{align*}
    H(\Lambda_1^*) &= \textstyle\min\nolimits_{\Lambda_1 \in \Ess\subobj{}{A_1}{B_1}} H(\Lambda_1),\\
    H(\Lambda_2^*) &= \textstyle\min\nolimits_{\Lambda_2 \in \Ess\subobj{}{A_2}{B_2}} H(\Lambda_2).
  \end{align*}
  We know from Lemma~\ref{rament.lem.L*L-in-Ess} that $\Lambda_1^* \otimes \Lambda_2^* \in \Ess\subobj{}{(A_1, A_2)}{(B_1, B_2)}$, so
  \begin{align*}
    \min\nolimits_{\Lambda \in \Ess\subobj{}{(A_1, A_2)}{(B_1, B_2)}} H(\Lambda) & \le H(\Lambda_1^* \otimes \Lambda_2^*) = H(\Lambda_1^*) + H(\Lambda_2^*)\\
                 &= \min\nolimits_{\Lambda_1 \in \Ess\subobj{}{A_1}{B_1}} H(\Lambda_1) + \min\nolimits_{\Lambda_2 \in \Ess\subobj{}{A_2}{B_2}} H(\Lambda_2).
  \end{align*}
  By majorizing the right-hand side of the above expression we get
  \begin{align*}
    \min\nolimits_{\Lambda_1 \in \Ess\subobj{}{A_1}{B_1}} H(\Lambda_1) &\le \sup\nolimits_{B_1 \in \upset{}A_1} \min\nolimits_{\Lambda_1 \in \Ess\subobj{}{A_1}{B_1}} H(\Lambda_1) = \phi(A_1) \quad \text{and}\\
    \min\nolimits_{\Lambda_2 \in \Ess\subobj{}{A_2}{B_2}} H(\Lambda_2) &\le \sup\nolimits_{B_2 \in \upset{}A_2} \min\nolimits_{\Lambda_2 \in \Ess\subobj{}{A_2}{B_2}} H(\Lambda_2) = \phi(A_2),
  \end{align*}
  whence
  $$
    \min\nolimits_{\Lambda \in \Ess\subobj{}{(A_1, A_2)}{(B_1, B_2)}} H(\Lambda) \le \phi(A_1) + \phi(A_2).
  $$
  Then, by majorizing the left-hand side, we arrive at
  $$
    \phi(A_1, A_2) = \sup\nolimits_{(B_1, B_2) \in \upset{} (A_1, A_2)} \min\nolimits_{\Lambda \in \Ess\subobj{}{(A_1, A_2)}{(B_1, B_2)}} H(\Lambda) \le \phi(A_1) + \phi(A_2).
  $$
  Finally,
  \begin{align*}
    \tilde r(X_1, X_2)
    &= \inf\nolimits_{(A_1, A_2) \in \upset{}(X_1, X_2)} \phi(A_1, A_2)\\
    &\le \inf\nolimits_{(A_1, A_2) \in \upset{}(X_1, X_2)} (\phi(A_1) + \phi(A_2))\\
    &= \inf\nolimits_{(A_1, A_2) \in \upset{}(X_1, X_2)} \phi(A_1) + \inf\nolimits_{(A_1, A_2) \in \upset{}(X_1, X_2)} \phi(A_2)\\
    &= \inf\nolimits_{A_1 \in \upset{}X_1} \phi(A_1) + \inf\nolimits_{A_2 \in \upset{}X_2} \phi(A_2) = \tilde r(X_1) + \tilde r(X_2).
  \end{align*}
  This completes the proof.
\end{proof}

\section{The Ramsey-Boltzmann entropy}
\label{rament.sec.r-b}

In this section we focus on the properties of the Ramsey-Boltzmann entropy as the maximal Ramsey entropy.
Our main result, the additivity of the Ramsey-Boltzmann entropy, is the consequence of the fact that
the behavior of the maximal entropy is governed by the properties of structural Ramsey degrees which
act as the corresponding diversity measure. 
We conclude the section with the discussion of the behavior of the Ramsey-Boltzmann entropy under
forgetful functors.

Let us start by showing that the Ramsey-Boltzmann entropy is the maximal Ramsey entropy on a category.
We start with a useful technical result.

\begin{LEM}\label{rament.lem.phi=log.srd}
  Let $\CC$ be a small category whose morphisms are mono and homsets are finite.
  Then $\phi(A) = \log \tilde t(A)$ for all $A \in \Ob(\CC)$, where we assume $\log \infty = \infty$.
\end{LEM}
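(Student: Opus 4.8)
The plan is to prove $\phi(A) = \log \tilde t(A)$ by combining the inequality we already have from Lemma~\ref{rament.lem.phi<=log.srd} with a matching lower bound. Lemma~\ref{rament.lem.phi<=log.srd} gives $\phi(A) \le \log \tilde t(A)$ for the Boltzmann entropy $H^{\Bol}(\Lambda) = \log|\Lambda|$, so the entire content of the statement is the reverse inequality $\phi(A) \ge \log \tilde t(A)$. Since $\phi(A) = \sup_{B \in \upset{}A} \min_{\Lambda \in \Ess\subobj{}AB} \log|\Lambda|$ and $\log$ is monotone and continuous, this reduces to showing $\sup_{B : A \to B} \min_{\Lambda \in \Ess\subobj{}AB} |\Lambda| \ge \tilde t(A)$. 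But Lemma~\ref{rament.lem.ess-col}~$(b)$ states exactly that this supremum equals $\tilde t(A)$, so the desired lower bound is immediate.

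First I would dispose of the trivial case $\tilde t(A) = \infty$: by Lemma~\ref{rament.lem.ess-col}~$(b)$ the quantity $\sup_{B : A \to B} \min_{\Lambda \in \Ess\subobj{}AB} |\Lambda|$ is then infinite, so for every $N$ there is a $B$ with $\min_{\Lambda \in \Ess\subobj{}AB} |\Lambda| \ge N$, giving $\phi(A) \ge \log N$ for all $N$ and hence $\phi(A) = \infty = \log\infty$, matching the convention $\log\infty = \infty$. Then for the main case $\tilde t(A) = t \in \NN$, I would note that $\phi(A) \le \log t$ is Lemma~\ref{rament.lem.phi<=log.srd}. For the reverse, Lemma~\ref{rament.lem.ess-col}~$(b)$ gives $\sup_{B : A \to B} \min_{\Lambda \in \Ess\subobj{}AB} |\Lambda| = t$, so there is a $B_0$ with $A \to B_0$ for which $\min_{\Lambda \in \Ess\subobj{}A{B_0}} |\Lambda| = t$; every essential partition of $\subobj{}A{B_0}$ therefore has at least $t$ blocks, so $\min_{\Lambda \in \Ess\subobj{}A{B_0}} \log|\Lambda| = \log t$, which forces $\phi(A) \ge \log t$.

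The only point requiring a little care is that $H^{\Bol}$ computes $\min$ over $\log|\Lambda|$, whereas Lemma~\ref{rament.lem.ess-col} is phrased in terms of $\min$ over $|\Lambda|$; since $\log$ is strictly increasing, the minimizer is the same and $\min_{\Lambda} \log|\Lambda| = \log \min_{\Lambda} |\Lambda|$, so the two $\min$/$\sup$ expressions translate cleanly. I do not anticipate a genuine obstacle here: the lemma is essentially a logarithmic restatement of Lemma~\ref{rament.lem.ess-col}~$(b)$, specialized to the Boltzmann entropy, and all the combinatorial work has already been carried out in establishing that $\tilde t(A) = \sup_B \min_\Lambda |\Lambda|$. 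The proof is thus short, and its role in the paper is to pin down that the Ramsey-Boltzmann entropy realizes the maximal possible value $\log \tilde t(A)$ among all choices of underlying entropy $H$, which is the sense in which it is maximal.
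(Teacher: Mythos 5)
Your proof is correct and follows essentially the same route as the paper: the upper bound is Lemma~\ref{rament.lem.phi<=log.srd}, and the lower bound comes from Lemma~\ref{rament.lem.ess-col} (the paper phrases this via part $(a)$, bounding $\tilde t(A)$ by $\lfloor 2^{\phi(A)}\rfloor$, while you invoke part $(b)$ and commute $\log$ with the $\sup$/$\min$ --- the same content). Your observation that the supremum in part $(b)$ is attained when finite, and your treatment of the infinite case, are both sound.
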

\begin{proof}
  Let $H^\Bol$ denote the Boltzmann entropy on partitions.
  Take any $A \in \Ob(\CC)$. We have already seen in Lemma~\ref{rament.lem.phi<=log.srd}
  that $\phi(A) \le \log \tilde t(A)$ for any entropy on partitions, so let us show that
  $\phi(A) \ge \log \tilde t(A)$ in case $\phi$ is based on the Boltzmann entropy.
  If $\phi(A) = \infty$ the statement is trivial. Assume, therefore, that
  $\phi(A) = r \in \RR$. Then for every $B \in \upset{}A$
  there is a $\Pi_B \in \Ess\subobj{}AB$ such that $H^\Bol(\Pi_B) \le r$, or equivalently,
  $|\Pi_B| \le \lfloor 2^r \rfloor$. Lemma~\ref{rament.lem.ess-col}~$(a)$ now yields that $\tilde t(A) \le \lfloor 2^r \rfloor$ whence
  $\log \tilde t(A) \le r$.
\end{proof}

\begin{THM}
  Let $\CC$ be a small category whose morphisms are mono and homsets are finite,
  let $\tilde r'$ be a Ramsey entropy and $\tilde r$ the Ramsey-Boltzmann entropy.
  Then $\tilde r'(X) \le \tilde r(X)$ for all $X \in \Ob(\CC)$.
\end{THM}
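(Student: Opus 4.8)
The plan is to reduce the statement to a pointwise comparison of the two auxiliary $\phi$-functions and then pass to the infimum. Since two different entropies on partitions are now in play, the first thing I would do is fix notation: write $\phi'$ for the auxiliary function attached to the arbitrary entropy $H$ underlying $\tilde r'$, so that $\tilde r'(X) = \inf_{A \in \upset{}X} \phi'(A)$, and write $\phi$ for the one attached to the Boltzmann entropy $H^\Bol$ underlying $\tilde r$, so that $\tilde r(X) = \inf_{A \in \upset{}X} \phi(A)$. Both are the function introduced just before Lemma~\ref{rament.lem.phi<=log.srd}, only based on different entropies on partitions.

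The crux is a single pointwise inequality, $\phi'(A) \le \phi(A)$ for every $A \in \Ob(\CC)$, which follows by sandwiching both values against $\log \tilde t(A)$. On one side, Lemma~\ref{rament.lem.phi<=log.srd} is stated for an \emph{arbitrary} entropy on partitions, so it applies verbatim to $H$ and gives $\phi'(A) \le \log \tilde t(A)$ for all $A$. On the other side, Lemma~\ref{rament.lem.phi=log.srd} pins down the Boltzmann value exactly, $\phi(A) = \log \tilde t(A)$ for all $A$. Chaining these yields $\phi'(A) \le \log \tilde t(A) = \phi(A)$, with the convention $\log \infty = \infty$ handling the case of infinite structural Ramsey degree.

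Finally I would take the infimum over $A \in \upset{}X$ on both sides. Infimum is monotone with respect to pointwise domination, so
$$
  \tilde r'(X) = \inf\nolimits_{A \in \upset{}X} \phi'(A) \le \inf\nolimits_{A \in \upset{}X} \phi(A) = \tilde r(X),
$$
which is the claimed inequality for every $X \in \Ob(\CC)$.

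There is essentially no obstacle to overcome here, since the substantive work has already been carried out in the two preparatory lemmas; the proof is a short corollary. The only point that genuinely requires care is the bookkeeping of which entropy each $\phi$-function is based on, together with the observation that the upper bound of Lemma~\ref{rament.lem.phi<=log.srd} holds for \emph{every} entropy on partitions while the matching lower bound of Lemma~\ref{rament.lem.phi=log.srd} is special to $H^\Bol$. It is exactly this asymmetry that makes the Ramsey-Boltzmann entropy maximal among all Ramsey entropies on~$\CC$.
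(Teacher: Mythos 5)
Your proposal is correct and follows exactly the paper's own argument: both sandwich $\phi'(A) \le \log \tilde t(A) = \phi(A)$ using Lemma~\ref{rament.lem.phi<=log.srd} for the arbitrary entropy and Lemma~\ref{rament.lem.phi=log.srd} for the Boltzmann entropy, then pass to the infimum over $A \in \upset{}X$. No differences worth noting.
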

\begin{proof}
  Let $\phi'$ and $\phi$ be the functions that correspond to $\tilde r'$ and $\tilde r$, respectively, so that
  $\tilde r'(X) = \inf_{A \in \upset{}X} \phi'(A)$ and $\tilde r(X) = \inf_{A \in \upset{}X} \phi(A)$.
  By Lemmas~\ref{rament.lem.phi<=log.srd} and~\ref{rament.lem.phi=log.srd} we have that
  $\phi'(A) \le \log \tilde t(A) = \phi(A)$ for all $A \in \Ob(\CC)$ whence the claim of the theorem follows immediately.
\end{proof}

The following two theorems are improved versions of the two main  results of Section~\ref{rament.sec.rament}.

\begin{THM}\label{rament.thm.R=log-t}
  Let $\CC$ be a small category with amalgamation whose morphisms are mono and homsets are finite.

  $(a)$ $\CC$ has finite structural Ramsey degrees if and only if $\CC$ admits the Ramsey-Boltzmann entropy.
  
  $(b)$ Assume that $\CC$ admits the Ramsey-Boltzmann entropy $\tilde r$.
  Then $X \in \Ob(\CC)$ is a subramsey object in $\CC$ if and only if $\tilde r(X) = 0$.
\end{THM}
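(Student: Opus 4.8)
The plan is to start from the identity $\tilde r(X) = \inf_{A \in \upset{}X} \log \tilde t(A)$, which follows at once from Lemma~\ref{rament.lem.phi=log.srd} together with the definition of $\tilde r$, and then to dispose of the two ``easy'' implications by citing earlier results: the forward direction of $(a)$ is the special case $H = H^\Bol$ of Theorem~\ref{rament.thm.R<=log-t}$(c)$, and the forward direction of $(b)$ is Theorem~\ref{rament.thm.R<=log-t}$(d)$. This leaves the two converses, which is exactly where the amalgamation hypothesis must be invoked, and I would treat them separately.

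For the converse of $(a)$, I would fix $X \in \Ob(\CC)$ and use $\tilde r(X) < \infty$ to extract some $A$ with $X \to A$ and $\tilde t(A) < \infty$ (were every such $A$ to have $\tilde t(A) = \infty$, the infimum defining $\tilde r(X)$ would be $\infty$). The key step is that I cannot transport finiteness along $X \to A$ directly, because Lemma~\ref{rament.lem.t-monotonous} is a monotonicity statement for the \emph{embedding} degree $t$, not for $\tilde t$. So I would route through $t$: finiteness of homsets gives $|\Aut(A)| < \infty$, hence by Proposition~\ref{rdbas.prop.sml} $t(A) = |\Aut(A)| \cdot \tilde t(A) < \infty$; monotonicity then yields $t(X) \le t(A) < \infty$; and a final application of Proposition~\ref{rdbas.prop.sml} to $X$ (again using $|\Aut(X)| < \infty$) gives $\tilde t(X) < \infty$. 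As $X$ is arbitrary, $\CC$ has finite structural Ramsey degrees.

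For the converse of $(b)$, I would exploit a discreteness (gap) argument. Assuming $\tilde r(X) = 0$ and using part $(a)$ to know that every $\tilde t(A)$ is a finite positive integer, each value $\log \tilde t(A)$ lies in $\{0\} \cup [\log 2, \infty)$: it is $0$ precisely when $\tilde t(A) = 1$, and otherwise at least $\log 2 > 0$. Since the infimum of these values is $0$, strictly below $\log 2$, it cannot be approached solely by values $\ge \log 2$; hence some $A \in \upset{}X$ must satisfy $\log \tilde t(A) = 0$, i.e.\ $\tilde t(A) = 1$. Such an $A$ is a Ramsey object with $X \to A$, so $X$ is a subramsey object.

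The hard part is the converse of $(a)$, and in particular the realization that one must detour through the embedding Ramsey degree to make monotonicity applicable; amalgamation is indispensable there, since it is the sole hypothesis powering Lemma~\ref{rament.lem.t-monotonous}. Once $(a)$ secures the finiteness of all structural degrees, the gap argument for $(b)$ is elementary, resting only on the fact that $\log$ separates $1$ from every integer $\ge 2$.
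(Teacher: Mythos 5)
Your proposal is correct and follows essentially the same route as the paper: both directions reduce via Lemma~\ref{rament.lem.phi=log.srd} to $\tilde r(X)=\inf_{A\in\upset{}X}\log\tilde t(A)$, the converse of $(a)$ is handled by passing to the embedding degree through Proposition~\ref{rdbas.prop.sml} and Lemma~\ref{rament.lem.t-monotonous}, and the converse of $(b)$ uses the discreteness of the value set $\{\log n : n\in\NN\}$ to turn the infimum into an attained minimum. The only cosmetic difference is that the paper explicitly notes the infimum is attained at some $A_0$ with $\tilde t(A_0)=2^{\tilde r(X)}$ already in part $(a)$, whereas you extract merely some $A$ with $\tilde t(A)<\infty$ there and defer the gap argument to $(b)$; both are valid.
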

\begin{proof}
  $(a)$ Direction $(\Rightarrow)$ was proved in Theorem \ref{rament.thm.R<=log-t}~$(c)$, so let us prove
  direction $(\Leftarrow)$. Take any $X \in \Ob(\CC)$ and let $\tilde r$ be the Ramsey-Boltzmann entropy.
  By the assumption, $\tilde r(X) < \infty$ for
  every $X \in \Ob(\CC)$. Let $X \in \Ob(\CC)$ be arbitrary and let us show  that $\tilde t(X) < \infty$.
  We know that
  $$
    \tilde r(X) = \inf\nolimits_{A \in \upset{} X} \phi(A) < \infty.
  $$
  Since $\phi(A) = \log \tilde t(A)$ (Lemma~\ref{rament.lem.phi=log.srd}) and $\tilde t(A) \in \NN \union \{\infty\}$, it follows that
  $$
    \{\phi(A) : A \in \upset{}X\} \subseteq \{ \log n : n \in \NN \} \union \{\infty\}.
  $$
  Therefore, there exists an $A_0 \in \upset{}X$ such that $\phi(A_0) = \tilde r(X) < \infty$, or, equivalently,
  $\tilde t(A_0) = 2^{\tilde r(X)} < \infty$. Since $X \to A_0$ by the choice of $A_0$, Lemma~\ref{rament.lem.t-monotonous}
  ensures that $t(X) \le t(A_0)$, whence, with the help of Proposition~\ref{rdbas.prop.sml}:
  $$
    \tilde t(X) \le \frac{|\Aut(A_0)|}{|\Aut(X)|} \tilde t(A_0) < \infty.
  $$
  (Recall that homsets, and in particular automorphisms groups, in $\CC$ are finite).
  
  $(b)$ Again, direction $(\Rightarrow)$ was proved in Theorem \ref{rament.thm.R<=log-t}~$(d)$, so let us prove
  direction $(\Leftarrow)$. Assume that $\tilde r(X) = 0$ for some $X \in \Ob(\CC)$. Then, using the results proved in $(a)$
  of this statement,
  $$
    \tilde r(X) = \inf\nolimits_{A \in \upset{} X} \phi(A) = \min\nolimits_{A \in \upset{} X} \phi(A) = 0.
  $$
  Therefore, there is an $A \in \upset{}X$ with $\phi(A) = 0$. But, as we have seen, $\phi(A) = \log \tilde t(A)$, whence $\tilde t(A) = 1$.
  So, $X$ is a subramsey object.
\end{proof}

\begin{THM}\label{rament.thm.additivity}
  Let $\CC_1$ and $\CC_2$ be small categories with amalgamation whose morphisms are mono and homsets are finite,
  and let $\tilde r_{\CC_1}$ and $\tilde r_{\CC_2}$ be the Ramsey-Boltzmann entropies. Then
  for all $X_1 \in \Ob(\CC_1)$ and $X_2 \in \Ob(\CC_2)$:
  $$
    \tilde r_{\CC_1 \times \CC_2}(X_1, X_2) = \tilde r_{\CC_1}(X_1) + \tilde r_{\CC_2}(X_2).
  $$
\end{THM}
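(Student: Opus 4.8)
The plan is to obtain the equality by combining the subadditivity already established in Theorem~\ref{rament.thm.srd-subadd} with a matching lower bound that is special to the Boltzmann entropy. For a general entropy $H$ we only have the inclusion of Lemma~\ref{rament.lem.L*L-in-Ess}, which forces $\le$; but for $H = H^\Bol$ the auxiliary function $\phi$ collapses to $\log \tilde t$ by Lemma~\ref{rament.lem.phi=log.srd}, and $\tilde t$ is \emph{exactly} multiplicative across products by Theorem~\ref{rpppg.thm.srd-mult}. Consequently both inequalities can be read off a single closed-form computation, and I would present the equality in one pass rather than proving two separate inequalities.

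Concretely, I would first record that in the product category the up-sets factor: a morphism $(X_1,X_2)\to(A_1,A_2)$ in $\CC_1\times\CC_2$ is a pair of morphisms $X_i\to A_i$, so $\hom_{\CC_1\times\CC_2}((X_1,X_2),(A_1,A_2))\ne\0$ exactly when $X_1\to A_1$ and $X_2\to A_2$. Hence $\upset{}(X_1,X_2)$ is precisely the set of pairs $(A_1,A_2)$ with $A_1\in\upset{}X_1$ and $A_2\in\upset{}X_2$. Next, invoking Lemma~\ref{rament.lem.phi=log.srd} in each of $\CC_1$, $\CC_2$ and $\CC_1\times\CC_2$, I rewrite the three Ramsey-Boltzmann entropies purely in terms of structural Ramsey degrees, so that $\tilde r_{\CC_1\times\CC_2}(X_1,X_2)=\inf_{A_1\in\upset{}X_1,\,A_2\in\upset{}X_2}\log\tilde t_{\CC_1\times\CC_2}(A_1,A_2)$, and similarly for the two factors.

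The heart of the argument is then Theorem~\ref{rpppg.thm.srd-mult}: since $\tilde t_{\CC_1\times\CC_2}(A_1,A_2)=\tilde t_{\CC_1}(A_1)\cdot\tilde t_{\CC_2}(A_2)$, taking $\log$ turns the product into the sum $\log\tilde t_{\CC_1}(A_1)+\log\tilde t_{\CC_2}(A_2)$, with the convention $\log\infty=\infty$. It then remains to factor the infimum of this separated sum over the product index set $\upset{}X_1\times\upset{}X_2$ as the sum of the two infima. I would verify this directly in the extended nonnegative reals: every summand satisfies $\log\tilde t(A_i)\ge\log 1=0$, both index sets are nonempty (each $\upset{}X_i$ contains $X_i$ via the identity), and for such data $\inf_{i,j}(a_i+b_j)=\inf_i a_i+\inf_j b_j$ — the $\ge$ direction is immediate from $a_i+b_j\ge \inf_i a_i+\inf_j b_j$, and for $\le$ one chooses $A_1$ and $A_2$ \emph{independently} realizing the two infima up to $\varepsilon$, the case where an infimum equals $\infty$ being trivial. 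This delivers $\tilde r_{\CC_1\times\CC_2}(X_1,X_2)=\tilde r_{\CC_1}(X_1)+\tilde r_{\CC_2}(X_2)$ outright.

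I expect the main obstacle to be bookkeeping rather than a genuine difficulty: one must handle the $\infty$ values consistently with the Convention, so that the multiplicativity of $\tilde t$ and the factorization of the infimum both remain valid when some degree is infinite, and one must apply the infimum-over-a-product identity only after confirming that both up-sets are nonempty. All the structural content is already supplied by Theorem~\ref{rpppg.thm.srd-mult} and Lemma~\ref{rament.lem.phi=log.srd}; the subadditivity half may alternatively be cited verbatim from Theorem~\ref{rament.thm.srd-subadd}, but since the closed-form computation yields the full equality, that citation is not strictly needed.
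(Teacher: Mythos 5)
Your proposal is correct and follows essentially the same route as the paper: both reduce $\phi$ to $\log\tilde t$ via Lemma~\ref{rament.lem.phi=log.srd}, invoke the multiplicativity of $\tilde t$ from Theorem~\ref{rpppg.thm.srd-mult} to get $\phi(A_1,A_2)=\phi(A_1)+\phi(A_2)$, and then split the infimum over $\upset{}(X_1,X_2)=\upset{}X_1\times\upset{}X_2$ into a sum of infima. Your write-up merely makes explicit a few steps the paper leaves implicit (the factorization of the up-set, the nonemptiness of the index sets, and the identity $\inf_{i,j}(a_i+b_j)=\inf_i a_i+\inf_j b_j$ in $[0,\infty]$).
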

\begin{proof}
  Recall that $\phi(A) = \log \tilde t(A)$ (Lemma~\ref{rament.lem.phi=log.srd}).
  So, it follows from Theorem~\ref{rpppg.thm.srd-mult} that
  $$
    \phi(A_1, A_2) = \phi(A_1) + \phi(A_2),
  $$
  for all $A_1 \in \Ob(\CC_1)$ and $A_2 \in \Ob(\CC_2)$. (Note that $(A_1, A_2)$ is then an object in $\CC_1 \times \CC_2$.) 
  Finally,
  \begin{align*}
    \tilde r(X_1, X_2)
    &= \inf\nolimits_{(A_1, A_2) \in \upset{}(X_1, X_2)} \phi(A_1, A_2)\\
    &= \inf\nolimits_{(A_1, A_2) \in \upset{}(X_1, X_2)} (\phi(A_1) + \phi(A_2))\\
    &= \inf\nolimits_{(A_1, A_2) \in \upset{}(X_1, X_2)} \phi(A_1) + \inf\nolimits_{(A_1, A_2) \in \upset{}(X_1, X_2)} \phi(A_2)\\
    &= \inf\nolimits_{A_1 \in \upset{}X_1} \phi(A_1) + \inf\nolimits_{A_2 \in \upset{}X_2} \phi(A_2) = \tilde r(X_1) + \tilde r(X_2).
  \end{align*}
  This completes the proof.
\end{proof}

\section{Conclusion}

Let us summarize the properties of the Ramsey-Boltzmann entropy by providing a setting where
all the results can be presented in a single category. Let $\CC$ be a small category.
The \emph{state space associated to $\CC$} is the coproduct $\SS = \coprod_{n \in \NN} \CC^n$ computed in $\Cat$, the
category of all small categories.
Explicitly, the objects of $\SS$ are tuples $A = (A_1, \ldots, A_n)$ of objects $A_1, \ldots, A_n \in \Ob(\CC)$
of all possible finite lengths $n \in \NN$, while morphisms exist only between tuples of the same length, and then
$f$ is a morphism $(A_1, \ldots, A_n) \to (B_1, \ldots, B_n)$ if and only if $f = (f_1, \ldots, f_n)$
where $f_i \in \hom_\CC(A_i, B_i)$, $1 \le i \le n$.
Note that $\SS$ comes with a bifunctor $\star : \SS \times \SS \to \SS$ defined to be the concatenation of tuples
both on objects:
$$
  (A_1, \ldots, A_n) \star (B_1, \ldots, B_m) = (A_1, \ldots, A_n, B_1, \ldots, B_m)
$$
and on morphisms:
$$
  (f_1, \ldots, f_n) \star (g_1, \ldots, g_m) = (f_1, \ldots, f_n, g_1, \ldots, g_m).
$$
In particular, $A \star B = (A, B) \in \Ob(\CC^2)$ for $A, B \in \Ob(\CC)$.

\begin{COR}
  Let $\CC$ be a small category with amalgamation whose morphisms are mono and homsets are finite,
  and let $\SS$ be the state space associated to $\CC$. Then
  $\SS$ admits the Ramsey-Boltzmann entropy if and only if $\CC$ has structural Ramsey degrees.
\end{COR}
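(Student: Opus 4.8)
The plan is to derive the statement as a direct application of Theorem~\ref{rament.thm.R=log-t}(a) to the category $\SS$. That theorem asserts, for any small category with amalgamation whose morphisms are mono and whose homsets are finite, that admitting the Ramsey-Boltzmann entropy is equivalent to having finite structural Ramsey degrees. So the first task is to check that $\SS$ inherits these four hypotheses from $\CC$. Smallness is clear since $\SS$ is a countable coproduct of the small categories $\CC^n$. A morphism of $\SS$ lives entirely inside a single $\CC^n$ and has the form $(f_1,\dots,f_n)$; such a morphism is mono exactly when every $f_i$ is mono, which holds because all morphisms of $\CC$ are mono. Homsets are finite because $\hom_\SS((A_1,\dots,A_n),(B_1,\dots,B_m))$ is empty when $m\ne n$ and equals $\prod_{i=1}^n \hom_\CC(A_i,B_i)$ --- a finite product of finite sets --- when $m=n$. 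Finally, amalgamation: any span in $\SS$ consists of three tuples of a common length $n$ (morphisms exist only between equal-length tuples), so we amalgamate coordinatewise using amalgamation in $\CC$.

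With the hypotheses in place, it remains to show that $\SS$ has finite structural Ramsey degrees if and only if $\CC$ does. The key observation is that $\SS$ is a disjoint union of the categories $\CC^n$ in the strong sense that there are no morphisms between tuples of different length. Consequently, for $A=(A_1,\dots,A_n)\in\Ob(\CC^n)$ the computation of $\tilde t_\SS(A)$ localizes to $\CC^n$: any $B$ with $A \toCC\SS B$ has length $n$, while any $B$ of a different length contributes only a vacuous requirement, since then $\subobj{}{A}{B}=\0$ and the arrow $C \overset\sim\longrightarrow (B)^A_{k,t}$ holds trivially with $C=B$ and $w=\id_B$. Hence the least index witnessing the degree is already determined by the equal-length $B$, giving $\tilde t_\SS(A)=\tilde t_{\CC^n}(A)$. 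Iterating the multiplicativity of structural Ramsey degrees under products (Theorem~\ref{rpppg.thm.srd-mult}) then yields $\tilde t_{\CC^n}(A_1,\dots,A_n)=\prod_{i=1}^n \tilde t_\CC(A_i)$, with Convention in mind.

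Putting these together, every object of $\SS$ has finite structural Ramsey degree precisely when every finite product $\prod_{i=1}^n \tilde t_\CC(A_i)$ is finite, which happens if and only if $\tilde t_\CC(A_i)<\infty$ for every $A_i\in\Ob(\CC)$ --- that is, if and only if $\CC$ has finite structural Ramsey degrees. (The single-object case $n=1$ already gives $\tilde t_\SS(A)=\tilde t_\CC(A)$ for $A\in\Ob(\CC)$, so the two conditions are visibly equivalent.) Combining this equivalence with Theorem~\ref{rament.thm.R=log-t}(a) applied to $\SS$ yields the corollary.

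The step I expect to require the most care is the localization claim $\tilde t_\SS(A)=\tilde t_{\CC^n}(A)$: one must handle the universal quantifier over all $B\in\Ob(\SS)$ in the definition of the structural Ramsey degree, in particular confirming that tuples $B$ whose length differs from that of $A$ impose no genuine constraint. Once that is settled, the remaining arguments are the routine verification of the hypotheses of Theorem~\ref{rament.thm.R=log-t} and a bookkeeping application of the product formula.
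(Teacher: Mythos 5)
Your proposal is correct and follows essentially the same route as the paper: verify that $\SS$ inherits the hypotheses of Theorem~\ref{rament.thm.R=log-t}~$(a)$, reduce the corollary to the equivalence of finite structural Ramsey degrees for $\SS$ and $\CC$, and establish that via $\tilde t_\SS(A_1,\ldots,A_n)=\tilde t_{\CC^n}(A_1,\ldots,A_n)=\prod_{i=1}^n \tilde t_\CC(A_i)$ using the coproduct structure and Theorem~\ref{rpppg.thm.srd-mult}. Your explicit check that tuples of mismatched length impose only a vacuous Ramsey condition is a detail the paper leaves implicit, and it is handled correctly.
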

\begin{proof}
  Note, first, that $\SS$ has amalgamation, that morphisms in $\SS$ are mono and that
  homsets in $\SS$ are finite.
  In view of Theorem~\ref{rament.thm.R=log-t}~$(a)$ it is clear that it suffices to show that
  $\SS$ has structural Ramsey degrees if and only if $\CC$ does. But this follows straightforwardly
  from the fact that $\SS$ is a coproduct of all finite powers of~$\CC$.
  Namely, if $(A_1, \ldots, A_n) \in \Ob(\SS)$ then
  $\tilde t_\SS(A_1, \ldots, A_n) = \tilde t_{\CC^n}(A_1, \ldots, A_n)$
  because $\SS$ is the coproduct of finite powers of $\CC$ and there are no morphisms between objects of
  $\CC^n$ and $\CC^m$ for $n \ne m$. On the other hand,
  $\tilde t_{\CC^n}(A_1, \ldots, A_n) = \prod_{i=1}^n \tilde t_\CC(A_i)$ by Theorem~\ref{rpppg.thm.srd-mult},
  and this holds even if some of the degrees are~$\infty$.
\end{proof}

\begin{COR}
  Let $\CC$ be a small category with amalgamation whose morphisms are mono and homsets are finite
  and assume that $\CC$ has structural Ramsey degrees.
  Let $\SS$ be the state space associated to $\CC$ and let $\tilde r_\SS$ be the Ramsey-Boltzmann entropy.
  Then for all $X, Y \in \Ob(\SS)$:

  $(a)$ $X \toCC\SS Y$ implies $\tilde r_\SS(X) \le \tilde r_\SS(Y)$;

  $(b)$ $\tilde r_\SS(X) \le \log \tilde t_\SS(X)$;

  $(c)$ $\tilde r_\SS(X) = 0$ if and only if $X$ is a subramsey object in $\SS$;

  $(d)$ $\tilde r_\SS(X \star Y) = \tilde r_\SS(X) + \tilde r_\SS(Y)$.
\end{COR}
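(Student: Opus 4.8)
The plan is to observe that all four parts reduce to applying the theorems of Sections~\ref{rament.sec.rament} and~\ref{rament.sec.r-b} to the category $\SS$ itself, once we record that $\SS$ meets their hypotheses. As already noted in the proof of the previous corollary, $\SS$ has amalgamation, its morphisms are mono, and its homsets are finite; and since $\CC$ has structural Ramsey degrees, so does $\SS$, whence by Theorem~\ref{rament.thm.R=log-t}~$(a)$ the category $\SS$ admits the Ramsey-Boltzmann entropy $\tilde r_\SS$. With these facts in hand, parts $(a)$, $(b)$ and $(c)$ are immediate specializations: $(a)$ is Theorem~\ref{rament.thm.R<=log-t}~$(a)$ applied to $\SS$; $(b)$ is Theorem~\ref{rament.thm.R<=log-t}~$(b)$ applied to $\SS$; and $(c)$ is Theorem~\ref{rament.thm.R=log-t}~$(b)$ applied to $\SS$, whose amalgamation and admission of the Ramsey-Boltzmann entropy are exactly the standing assumptions of that theorem.

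The only part requiring genuine work is $(d)$, the additivity with respect to the concatenation bifunctor $\star$. The subtlety is that $\star$ is not literally a categorical product, so one must bridge between $\star$ on $\SS$ and the product $\times$ of categories on which Theorem~\ref{rament.thm.additivity} is stated. First I would exploit the coproduct structure of $\SS$: since there are no morphisms between objects of $\CC^n$ and $\CC^m$ for $n \ne m$, the entire computation of $\tilde r_\SS$ on an object $X \in \Ob(\CC^n)$ is confined to $\CC^n$, so that $\tilde r_\SS(X) = \tilde r_{\CC^n}(X)$. Writing $X \in \Ob(\CC^n)$ and $Y \in \Ob(\CC^m)$, the concatenation $X \star Y$ is an object of $\CC^{n+m}$, and under the canonical identification $\CC^{n+m} = \CC^n \times \CC^m$ it corresponds precisely to the pair $(X, Y)$.

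With this identification in place, $(d)$ follows from a single application of Theorem~\ref{rament.thm.additivity} with $\CC_1 = \CC^n$ and $\CC_2 = \CC^m$ (each a small category with amalgamation, mono morphisms and finite homsets): $\tilde r_\SS(X \star Y) = \tilde r_{\CC^{n+m}}(X \star Y) = \tilde r_{\CC^n \times \CC^m}(X, Y) = \tilde r_{\CC^n}(X) + \tilde r_{\CC^m}(Y) = \tilde r_\SS(X) + \tilde r_\SS(Y)$. I expect the main obstacle to be purely bookkeeping: one must check carefully that the Ramsey-Boltzmann entropy computed in $\SS$ really does restrict to the one computed in each power $\CC^n$ (which is where the coproduct structure is essential), and that the associativity implicit in identifying $\CC^{n+m}$ with $\CC^n \times \CC^m$ is compatible with $\star$. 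An alternative, equally short route would combine $\phi = \log \tilde t$ (Lemma~\ref{rament.lem.phi=log.srd}) with the multiplicativity of structural Ramsey degrees (Theorem~\ref{rpppg.thm.srd-mult}) to obtain $\tilde r_\SS(A_1, \ldots, A_n) = \sum_{i=1}^n \tilde r_\CC(A_i)$ directly, after which $(d)$ is immediate.
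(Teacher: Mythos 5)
Your proposal is correct and follows essentially the same route as the paper: parts $(a)$--$(c)$ are the same direct specializations of Theorems~\ref{rament.thm.R<=log-t} and~\ref{rament.thm.R=log-t} to $\SS$, and part $(d)$ is proved exactly as in the paper by using the coproduct structure to reduce $\tilde r_\SS$ to $\tilde r_{\CC^n}$ and then invoking Theorem~\ref{rament.thm.additivity} under the identification $\CC^{n+m} = \CC^n \times \CC^m$. The alternative route you sketch via $\phi = \log\tilde t$ and Theorem~\ref{rpppg.thm.srd-mult} is also sound but is not what the paper does here.
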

\begin{proof}
  Note, first, that $\SS$ has amalgamation, that morphisms in $\SS$ are mono and that
  homsets in $\SS$ are finite.

  $(a)$ is Theorem~\ref{rament.thm.R<=log-t}~$(a)$; $(b)$ is Theorem~\ref{rament.thm.R<=log-t}~$(b)$;
  $(c)$ is Theorem~\ref{rament.thm.R=log-t}~$(b)$. Let us show~$(d)$.
  Recall again that $\SS$ is the coproduct of all finite powers of~$\CC$, so
  for every $X = (A_1, \ldots, A_n) \in \Ob(\SS)$
  we have that $\tilde r_\SS(X) = \tilde r_{\CC^n}(A_1, \ldots, A_n)$.
  Now for $X = (A_1, \ldots, A_n) \in \Ob(\SS)$ and
  $Y = (B_1, \ldots, B_m) \in \Ob(\SS)$ Theorem~\ref{rament.thm.additivity}
  gives us that
  \begin{align*}
    \tilde r_\SS(X \star Y)
    &= \tilde r_{\CC^{n+m}}(A_1, \ldots, A_n, B_1, \ldots, B_m)\\
    &= \tilde r_{\CC^n}(A_1, \ldots, A_n) + \tilde r_{\CC^m}(B_1, \ldots, B_m)\\
    &= \tilde r_\SS(X) + \tilde r_\SS(Y).
  \end{align*}
  This completes the proof.
\end{proof}

We conclude the paper with a discussion on the behavior of the Ramsey-Boltzmann
entropy under forgetful functors. Since such a functor takes an object with more structure to an object
with less structure, it makes sense to assume that the entropy should increase along the way.

Let $U : \CC \to \DD$ be a forgetful functor. For $D \in \Ob(\DD)$ let
$
  U^{-1}(D) = \{C \in \Ob(\CC) : U(C) = D \}
$. Note that this is not necessarily a set. However, we say that $U : \CC \to \DD$ is
\emph{finitary} if $U^{-1}(D)$ is a finite set for all $D \in \Ob(\DD)$.
A forgetful functor $U : \CC \to \DD$ is \emph{reasonable} (cf.~\cite{KPT}) if
for every $e \in \hom_\DD(A, B)$ and every $C \in U^{-1}(A)$ there is a $D \in U^{-1}(B)$ and
a morphism $f \in \hom_\CC(C, D)$ such that $U(f) = e$:
\begin{center}
    \begin{tikzcd}
      C \arrow[r, "f"] \arrow[d, dashed, mapsto, "U"'] & D \arrow[d, dashed, mapsto, "U"] \\
      A \arrow[r, "e"'] & B
    \end{tikzcd}
\end{center}
A forgetful functor $U : \CC \to \DD$ has \emph{unique restrictions} \cite{masul-kpt} if
for every $D \in \Ob(\CC)$ and every $e \in \hom_\DD(A, U(D))$ there is a \emph{unique} $C \in U^{-1}(A)$
and a morphism $f \in \hom_\CC(C, D)$ such that $U(f) = e$:
\begin{center}
    \begin{tikzcd}
      C \arrow[r, "f"] \arrow[d, dashed, mapsto, "U"'] & D \arrow[d, dashed, mapsto, "U"] \\
      A \arrow[r, "e"'] & U(D)
    \end{tikzcd}
\end{center}
Following \cite{vanthe-more} we say that $U : \CC \to \DD$ \emph{has the expansion property}
if for every $A \in \Ob(\DD)$ there exists a $B \in \Ob(\DD)$ such that $C \toCC\CC D$ for all
$C \in U^{-1}(A)$ and all $D \in U^{-1}(B)$.

\begin{THM}\label{rament.thm.small2-obj}
  Let $\CC$ and $\DD$ be small categories with amalgamation whose homsets are finite and morphisms are mono. Assume also that
  $\DD$ is a directed category and that the two categories admit Ramsey-Boltzmann entropies
  $\tilde r_\CC$ and $\tilde r_\DD$, respectively.
  Assume that $U : \CC \to \DD$ is a forgetful functor which is reasonable, finitary, with unique restrictions and
  with the expansion property. Then $\tilde r_\DD(U(X)) \ge \tilde r_\CC(X)$ for all $X \in \Ob(\CC)$.
\end{THM}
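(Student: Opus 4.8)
The plan is to reduce the theorem to a single inequality between structural Ramsey degrees and then establish that inequality by a colour-transfer argument across $U$.

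Since $\tilde r_\CC$ and $\tilde r_\DD$ are Ramsey--Boltzmann entropies, Lemma~\ref{rament.lem.phi=log.srd} gives $\phi_\CC(A)=\log\tilde t_\CC(A)$ and $\phi_\DD(B)=\log\tilde t_\DD(B)$, so that $\tilde r_\CC(X)=\inf_{A\in\upset\CC X}\log\tilde t_\CC(A)$ and $\tilde r_\DD(U(X))=\inf_{B\in\upset\DD{U(X)}}\log\tilde t_\DD(B)$. Thus it suffices to show: for every $B\in\Ob(\DD)$ with $U(X)\toCC\DD B$ there is a $D\in\Ob(\CC)$ with $X\toCC\CC D$ and $\tilde t_\CC(D)\le\tilde t_\DD(B)$; taking the infimum over such $B$ then yields $\tilde r_\CC(X)\le\tilde r_\DD(U(X))$. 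Given such a $B$, pick $e\in\hom_\DD(U(X),B)$; since $U$ is reasonable and $X\in U^{-1}(U(X))$, there is a $D\in U^{-1}(B)$ and a morphism $X\to D$ lifting $e$, so $X\toCC\CC D$. It therefore remains to prove the key inequality $\tilde t_\CC(D)\le\tilde t_\DD(U(D))$ for every $D\in\Ob(\CC)$.

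Before the transfer I would record a lifting lemma forced by unique restrictions: lifts of morphisms into a fixed $\CC$-object compose, and the lift of a $\DD$-isomorphism is a $\CC$-isomorphism. In particular, every $\beta\in\Aut_\DD(B)$ lifts, against any fixed expansion $C\in U^{-1}(B)$, to an isomorphism $\hat\beta:C'\to C$ between expansions of $B$; one obtains this by lifting $\beta$ and $\beta^{-1}$ and invoking the uniqueness clause applied to $\id_B$ to see the two lifts are mutually inverse. This is exactly what makes the colouring we push down to $\DD$ well defined on $\Boxed{\sim_B}$-classes. Now fix $D$, set $B=U(D)$ and $t=\tilde t_\DD(B)$ (finite, since $\DD$ admits the Ramsey--Boltzmann entropy). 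To see $\tilde t_\CC(D)\le t$, take any $k$ and any $E$ with $D\toCC\CC E$, and put $G_0=U(E)$. By the expansion property choose $G_1$ so that every object of $U^{-1}(G_0)$ embeds into every object of $U^{-1}(G_1)$, and by $\tilde t_\DD(B)=t$ choose $G$ with $G_1\toCC\DD G$ and $G\overset\sim\longrightarrow(G_1)^B_{k',t}$, where the palette $k'$ is large enough to record the finitely many (finitary!) expansion types in $U^{-1}(B)$ together with a colour from $k$. Lifting the chain $G_0\toCC\DD G$ against $E$ (reasonable) produces $F\in U^{-1}(G)$ with $E\toCC\CC F$.

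Given any $\chi:\subobj{}DF\to k$, define $\chi'$ on $\subobj{}BG$ by sending a copy $e/\Boxed{\sim_B}$ to the pair consisting of the isomorphism type of the domain of its unique lift into $F$ and, when that domain is $D$, the $\chi$-colour of the lifted copy of $D$ (a dummy value otherwise); the lifting lemma makes this well defined. Applying $G\overset\sim\longrightarrow(G_1)^B_{k',t}$ yields $w:G_1\to G$ with $|\chi'(w\cdot\subobj{}B{G_1})|\le t$. Lift $w$ uniquely to $\hat w:C_w\to F$ with $C_w\in U^{-1}(G_1)$; since $E\in U^{-1}(G_0)$, the expansion property supplies $v:E\to C_w$, and $w'=\hat w\cdot v:E\to F$ is the desired morphism. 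A copy $d/\Boxed{\sim_D}\in\subobj{}DE$ maps under $w'$ to $\hat w\cdot v\cdot d$, whose image under $U$ is the copy $w\cdot U(vd)/\Boxed{\sim_B}\in w\cdot\subobj{}B{G_1}$; by uniqueness of lifts its $\chi'$-colour has domain-type $D$ and records precisely $\chi(w'\cdot d/\Boxed{\sim_D})$. Hence $\chi(w'\cdot\subobj{}DE)$ injects into the colours of $\chi'(w\cdot\subobj{}B{G_1})$, so $|\chi(w'\cdot\subobj{}DE)|\le t$. This exhibits $F\overset\sim\longrightarrow(E)^D_{k,t}$ and gives $\tilde t_\CC(D)\le t$. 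The main obstacle is the mismatch of expansions: the Ramsey witness $w$ produced in $\DD$ lifts to a morphism whose domain $C_w$ need not be the chosen expansion $E$ of $G_0$, so without further input one cannot turn $w$ into a morphism $E\to F$. This is precisely where the expansion property is indispensable, letting $E$ embed into the ``wrong'' expansion $C_w$ and thereby restoring a morphism $E\to F$ compatible with the transferred colouring; the secondary, more routine point is the well-definedness of $\chi'$ on subobject classes, handled by the automorphism-lifting lemma.
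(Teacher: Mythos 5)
Your proposal is correct and its top-level architecture coincides with the paper's: both reduce the theorem to the single inequality $\tilde t_\CC(A)\le\tilde t_\DD(U(A))$ and then use reasonableness of $U$ together with $\phi=\log\tilde t$ (Lemma~\ref{rament.lem.phi=log.srd}) to compare the two infima defining $\tilde r_\CC(X)$ and $\tilde r_\DD(U(X))$. Where you part ways is at the key inequality itself: the paper simply invokes \cite[Corollary 6.4~$(b)$]{masul-kpt}, which gives the stronger counting identity $\tilde t_\DD(U(A))=\sum_{i=1}^n\tilde t_\CC(B_i)$ over isomorphism representatives $B_1,\dots,B_n$ of $U^{-1}(U(A))$, and then discards all summands but $\tilde t_\CC(A)$. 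You instead prove the inequality from scratch by the standard colour-transfer: push a colouring of $\subobj{}{D}{F}$ down to a colouring of $\subobj{}{B}{G}$ that records the expansion type of the unique lift together with the $\chi$-colour when that type is $D$, apply the Ramsey relation in $\DD$, and pull the oligochromatic witness back up, using the expansion property to correct the mismatch of expansions. This buys a self-contained proof of exactly what is needed (at the cost of not obtaining the full summation formula), and your identification of the expansion property as the indispensable ingredient, and of the well-definedness of $\chi'$ on $\Boxed{\sim_B}$-classes via lifted automorphisms, matches the role these hypotheses play in the cited result.

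Two points deserve to be made explicit. First, your step ``lifting the chain $G_0\toCC\DD G$ against $E$'' presupposes $G_0\toCC\DD G_1$, which you obtain from the expansion property only if $U^{-1}(G_1)\ne\0$; this is not vacuously guaranteed, but it does follow from the hypotheses: by directedness of $\DD$ choose $Z$ with $G_0\toCC\DD Z$ and $G_1\toCC\DD Z$, note $U^{-1}(Z)\ne\0$ by reasonableness applied to $E\in U^{-1}(G_0)$, and then unique restrictions applied to any morphism $G_1\to Z$ produces an object of $U^{-1}(G_1)$. This appears to be the one place where the directedness of $\DD$ (which your argument otherwise never uses) is genuinely needed. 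Second, the well-definedness of $\chi'$ needs the lift in the unique-restrictions property to be unique as a pair (object together with morphism), not merely unique in its domain object; your automorphism-lifting lemma uses exactly this reading, which is the intended one, but it is worth stating.
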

\begin{proof}
  Take any $A \in \Ob(\CC)$ and let $D = U(A)$. Then $U^{-1}(D)$ is finite.
  Moreover, $\tilde t_{\CC}(B) < \infty$ for all $B \in U^{-1}(D)$ because $\CC$ admits the Ramsey-Boltzmann
  entropy (Theorem~\ref{rament.thm.R=log-t}). Let $B_1$, \ldots, $B_n$ be representatives of isomorphism classes of objects
  in $U^{-1}(D)$. Without loss of generality we can assume that $A \cong B_1$. Then by \cite[Corollary 6.4 $(b)$]{masul-kpt},
  $$
    \tilde t_\DD(U(A)) = \tilde t_{\DD}(D) = \sum_{i=1}^n \tilde t_{\CC}(B_i) \ge \tilde t_{\CC}(B_1) = \tilde t_{\CC}(A).
  $$
  Having in mind that $\phi(A) = \log \tilde t(A)$ (Lemma~\ref{rament.lem.phi=log.srd}), we get that $\phi_\CC(A) \le \phi_\DD(U(A))$.

  Now, let $X \in \Ob(\CC)$ be arbitrary.
  Since $U$ is reasonable for every $D \in \upset{\DD} U(X)$ there is a $B \in \upset{\CC} X$ with $U(B) = D$.
  Therefore, for every $D \in \upset{\DD} U(X)$ there is a $B \in \upset{\CC} X$ with $\phi_\CC(B) \le \phi_\DD(U(B))
  = \phi_\DD(D)$. This immediately implies
  $$
    \tilde r_\CC(X) = \inf\nolimits_{B \in \upset{\CC}X} \phi_\CC(B) \le
    \inf\nolimits_{D \in \upset{\DD}U(X)} \phi_\DD(D) = \tilde r_\DD(U(X)),
  $$
  and the proof is complete.
\end{proof}

\section{Acknowledgement}

This research was supported by the Science Fund of the Republic of Serbia, Grant No.~7750027,
\textit{Set-theoretic, model-theoretic and Ramsey-theoretic phenomena in mathematical structures: similarity and diversity -- SMART}.



\begin{thebibliography}{99}\frenchspacing
\bibitem{baez-fritz-2014}
  J. Baez, T. Fritz.
  A Bayesian characterization of relative entropy.
  Theory and Applications of Categories, 29(2014), 421--456.

\bibitem{baez-fritz-leinster-2011}
  J. Baez, T. Fritz, T. Leinster.
  A characterization of entropy in terms of information loss.
  Entropy, 13(2011) 1945--1957.

\bibitem{fouche97}
  W. L. Fouch\'e.
  Symmetry and the Ramsey degree of posets.
  Discrete Math. 167/168 (1997), 309--315.
\bibitem{fouche98}
  W. L. Fouch\'e.
  Symmetries in Ramsey theory.
  East–West J. Math. 1 (1998), 43--60.
\bibitem{fouche99}
  W. L. Fouch\'e.
  Symmetry and the Ramsey degrees of finite relational structures.
  J. Comb. Theory Ser. A 85 (1999), 135--147.
\bibitem{GLR}
  R. L. Graham, K. Leeb, B. L. Rothschild.
  Ramsey's theorem for a class of categories.
  Advances in Math.\ 8 (1972), 417--443; errata 10 (1973), 326--327
\bibitem{KPT}
  A. S. Kechris, V. G. Pestov, S. Todor\v cevi\'c.
  Fra\"\i ss\'e limits, Ramsey theory and topological dynamics of automorphism groups.
  Geom.\ Funct.\ Anal.\ 15 (2005), 106--189
\bibitem{lawvere-cats-entropy-1984}
  F. W. Lawvere.
  State categories, closed categories, and the existence of semi-continuous entropy finctions.
  Institute for Mathematics and its Applications, University of Minnesota, IMA Preprint Series \#86, 1984.
\bibitem{leeb-cat}
    K.\ Leeb.
    The categories of combinatorics.
    Combinatorial structures and their applications. Gordon and Breach, New York (1970).
\bibitem{leeb-vorlesungen}
  K.\ Leeb.
  Vorlesungen \"uber Pascaltheorie.
  Arbeitsberichte des Instituts f\"ur mathematische Maschinen und Datenverarbeitung,
  Band 6, Nummer 7, Firedrich Alexander Universit\"at, Erlangen, 1973.
\bibitem{leinster-entropy-diversity-2020}
  T. Leinster. Entropy and Diversity: The Axiomatic Approach.
  Cambridge University Press (2020).
\bibitem{lieb-yngvason-1999}
  E. H. Lieb, J. Yngvason.
  The physics and mathematics of the second law of thermodynamics.
  Physics Reports 310 (1999) 1--96
\bibitem{masul-kpt}
    D. Ma\v sulovi\'c.
    The Kechris-Pestov-Todor\v cevi\'c correspondence from the point of view of category theory.
    Applied Categorical Structures 29 (2021), 141--169
\bibitem{masul-rpppg}
  D.\ Ma\v sulovi\'c.
  Ramsey properties of products and pullbacks of categories and the Grothendieck construction.
  Preprint (available as arXiv:2204.00829)
\bibitem{masulovic-ramsey}
  D.\ Ma\v sulovi\'c, L.\ Scow.
  Categorical equivalence and the Ramsey property for finite powers of a primal algebra.
  Algebra Universalis 78 (2017), 159--179
\bibitem{mu-pon}
  M. M\"uller, A. Pongr\'acz.
  Topological dynamics of unordered Ramsey structures.
  Fund.\ Math.\ 230 (2015), 77--98
\bibitem{Nesetril-Rodl}
  J.\ Ne\v set\v ril, V.\ R\"odl.
  Partitions of finite relational and set systems.
  J.\ Combin.\ Theory Ser.\ A 22 (1977), 289--312.
\bibitem{vanthe-more}
  L.\ Nguyen Van Th\'e.
  More on the Kechris-Pestov-Todorcevic correspondence: precompact expansions.
  Fund.\ Math.\ 222 (2013), 19--47
\bibitem{Zucker-1}
  A. Zucker.
  Topological dynamics of automorphism groups, ultrafilter combinatorics and the Generic Point Problem.
  Trans. Amer. Math. Soc. 368 (2016), 6715--6740.
\end{thebibliography}
\end{document}